\documentclass[10pt,reqno,final]{amsart}
\usepackage{amsfonts}
\usepackage[notcite,notref]{showkeys}
\usepackage{url,hyperref,multirow}
\usepackage{color}
\usepackage{cases}
\usepackage{subfigure}
\usepackage{comment}

\usepackage{epsfig,amsmath,bm,caption2,epstopdf}
\usepackage{amssymb,version,graphicx,fancybox,mathrsfs,pifont,booktabs,wrapfig,mathtools}

\textheight=21.5cm
\textwidth=14.5cm
\setlength{\oddsidemargin}{0.7cm}
\setlength{\evensidemargin}{0.7cm}

\catcode`\@=11 \theoremstyle{plain}
\@addtoreset{equation}{section}   

\@addtoreset{figure}{section}
\renewcommand\thefigure{\thesection.\@arabic\c@figure}
\@addtoreset{table}{section}
\renewcommand\thetable{\thesection.\@arabic\c@table}


\newtheorem{thm}{\bf Theorem}[section]
\newtheorem{cor}{\bf Corollary}[section]

\newenvironment{corollary}{\begin{cor}} {\end{cor}}
\newtheorem{lmm}{\bf Lemma}[section]

\newenvironment{lemma}{\begin{lmm}}{\end{lmm}}
\theoremstyle{remark}
\newtheorem{remark}{\bf Remark}[section]

\def \ri {{\rm i}}

\newcommand{\bs}[1]{\boldsymbol{#1}}



\usepackage{graphicx}

\begin{document}
	\graphicspath{{./Figures/}}

	\title[RCL with real transformation] {A Novel PML-type Technique for Acoustic Scattering Problems based on A Real Coordinate Transformation}
	 \author[J. Wang, \,   L. Wang \,  \& \, B. Wang]{Jiangxing Wang${}^{1}$, \;  Li-Lian Wang${}^{2}$ \; and \; Bo Wang${}^{1}$}	

	\subjclass[2020]{65N30, 65N12, 35J05, 78A40, 65G50.}	
	\keywords{Time-harmonic wave scattering, Helmholtz equation, real coordinate transformation, perfectly matched layer, oscillation, substitution}
	\thanks{${}^{1}$MOE-LCSM,
	 School of Mathematics and Statistics, Hunan Normal University, Changsha, Hunan, 410081, China and Xiangjiang Laboratory, Changsha, 410205, China. The research of the authors is partially supported by the Major Program of Xiangjiang Laboratory(No.22XJ01013), NSFC (grant No. 52331002, 12022104, 12371394), Key Project of Hunan Provincial Department of Education (grant No. 22A033) and the Changsha municipal natural science fundation (No. kq2208158). Emails: jxwang@hunnu.edu.cn (J. Wang); bowang@hunnu.edu.cn (B. Wang).\\
	\indent ${}^{2}$Division of Mathematical Sciences, School of Physical and Mathematical Sciences, Nanyang Technological University, 637371, Singapore.  Email: lilian@ntu.edu.sg (L.-L. Wang).
 }

	\begin{abstract} It is known that any {\em real coordinate transformation} (RCT)  to compress waves in an unbounded domain into a bounded domain results in infinite oscillations   that cannot be resolved by any grid-based method.  In this paper, we intend to show that it is viable if  the outgoing waves are compressed along the radial direction and the resulting oscillatory pattern is extracted explicitly.  We therefore construct a perfectly matched layer (PML)-type technique for domain reduction of wave  scattering problems using RCT, termed as  real compressed layer (RCL). Different from all existing approaches, the RCL technique has two features: (i) the RCL-equation only involves real-valued coefficients, which is more desirable for computation and analysis; and (ii) the layer is not ``artificial'' in the sense that the computed field in the layer can recover the outgoing wave of the original scattering problem in the unbounded domain. Here we demonstrate the essential idea and performance of the RCL for the two-dimensional Helmholtz problem with  a bounded scatterer, but this technique can be extended to three dimensions in a similar  setting.
 	\end{abstract}
	\maketitle
	
	\section{Introduction}
	We propose a new  PML-type technique
 to reduce the  time-harmonic wave scattering problem governed by the exterior Helmholtz equation
	\begin{subequations} \label{helmholtz2eq}
		\begin{align}
			&\mathcal{H}[U]:=-\Delta  U(\hat{\bs x})-k^2 U({\hat{\bs x}})=0,\quad \hat{\bs x}\in\Omega^e:=\mathbb R^2\backslash D,\label{helmholtz2eq2}\\
			& U(\hat{\bs x})=g(\hat{\bs x}), \quad\hat{\bs x}\in\Gamma_D=\partial{D},\label{helmholtz2bc1}\\
			&\frac{\partial  U} {\partial \rho} -\ri k U= o(\rho^{-1/2}) \;\;{\rm as}\;\; \rho = |{\hat{\bs x}}| \to \infty, \label{helmholtz2bc2}
		\end{align}
	\end{subequations}
to a bounded domain, where $k>0$ denotes the wave number, $D\subset {\mathbb R}^2$ is a bounded scatterer with Lipschitz boundary $\Gamma_{\!D}$ and
$g$ is a given incident wave.

Similar to the widely-used PML approach  introduced by Berenger \cite{berenger1996three,berenger1994perfectly}, we enclose the region of interest by a layer but construct the equation therein  very differently.
In contrast to many  existing techniques  based on the complex coordinate stretching/transformation, we use a real compression coordinate transformation.
However, as commented in \cite{johnson2008notes} ``{\it any real coordinate mapping from an infinite to a finite domain will result in solutions that oscillate infinitely fast as the boundary is approached -- such fast oscillations cannot be represented by any finite-resolution grid, and will instead effectively form a reflecting hard wall.}" Much of this paper is to show that the real compression coordinate transformation indeed works when it is properly integrated with another technique.

Before we elaborate on this technique, we feel compelled to briefly review the relevant existing methods to  motivate this new technique and demonstrate its distinction. Due to the fundamental importance in applications, the Helmholtz scattering problems have been extensively studied in the literature. For the well-posedness, we refer  to \cite{colton2013integral}. For numerical computations, one viable  approach  
	 is to introduce a boundary element method (BEM) based on an integral representation 
 \cite{colton2013integral}. However, the BEM has  limited capability to deal with complex scatterers and/or inhomogeneous media. Another  approach  is to truncate the domain and impose the artificial boundary conditions based on the  Dirichlet-to-Neumann (DtN) technique.
  The DtN notion is only available for special geometries and  involve global series representations,  so it is typically complicated to implement  \cite{keller1989exact,wang2012fast,yang2016seamless,zhang2019seamless}.
   To overcome the drawbacks of the DtN boundaries, some localized variants were introduced in \cite{bayliss1980radiation,engquist1977absorbing} which
    are local and hence easy to implement, but they are low order and not fully non-reflecting at times.
	
	A commonly-used  approach  for the unbounded domain reduction 
	is the PML which was first introduced by Berenger \cite{berenger1996three,berenger1994perfectly} in time domain. The basic idea of the PML method is to surround the computational domain by a layer filled with specially designed lossy media aiming to attenuate all waves scattered from  the interior region. The governing Helmholtz equation is then modified in such a way so that any outgoing wave is perfectly transmitted from the domain into the layer and then damped, regardless of the incident angle. In the frequency domain, the idea of constructing PML  can be simply interpreted as a complex coordinate stretch in the governing equations \cite{chew19943d}. Since then, various constructions of PML techniques
	have been proposed and well studied in the literature \cite{chew19943d,turkel1998absorbing}. Moreover, they have been populated into major software package such as the COMSOL Multiphysics. Very recently, a truly exact perfect absorbing layer (PAL) with general star-shaped domain truncation of the exterior Helmholtz equation was introduced in \cite{wang2017perfect,yang2019truly}. It is remarkable to point out that the truncation by the PAL is truly exact in the sense that the PAL solution is identical to the original solution in the inner domain \cite{yang2019truly}.
	It is noteworthy that the transformation in the PAL approach is quite different from the transformation in the PML approach \cite{bermudez2007optimal,chen2005adaptive,chew19943d}. Indeed, the mapping used in PAL method can be viewed as a composite mapping with a composition of a real mapping that compresses $\rho\in(a,\infty)$ into $r\in(a,a+d)$ and the complex mapping used in the PML approach with a differential absorbing function. However, the real compression mapping might cause  oscillations of the PAL solution in the layer and result in degenerating coefficients in the PAL equation. In order to extract the essential oscillation and remove the singular coefficients, the authors introduced a judicious substitution by separating the PAL solution as a product of an oscillatory  part, a singular factor and a well-behaved part, so one can use the finite element/spectral element methods to resolve this well-behaved part.
	
	The convergence of the PML method has been drawn many researchers' attention in the past years \cite{bao2010adaptive,bao2023convergence,hohage2003solving,lassas1998existence,lassas2001analysis,zhang2018high,zhang2022exponential,zhang2023higher}. It is proved by Lassas and Smoersalo in \cite{lassas1998existence} that the PML solution convergent exponentially to the Helmholtz scattering problem for the circular and smooth PML layers as the thickness of the layer tends to infinity. The anisotropic PML method in which the PML layer is placed outside of a rectangle or cubic domain. The exponentially convergence property of the anisotroptic PML is proved in \cite{chen2013anisotropic,liang2016convergence}. Further,the convergence of the uniaxial PML method has been considered recently by Chen and Liu \cite{chen2005adaptive}, Bramble and Pasciak \cite{bramble2010analysis}, Chen and Zheng \cite{chen2010convergence}. In practical application, the adaptive PML method whose main idea is to use the posteriori error estimate to determine the PML parameter and use the adaptive finite element method to solve the PML equation has been studied in \cite{chen2008posteriori,chen2013adaptive,chen2005adaptive,chen2008adaptive}.

Observe that in the polar coordinates $(\rho, \theta),$ the far-field outgoing wave of the exterior Helmholtz problem \eqref{helmholtz2eq} has a well separation of decay and oscillation (see  the related analysis in Subsection \ref{section:idea} below):
\begin{equation} \label{solu00}
		U(\rho,\theta)=\sqrt{\frac{2}{\pi k\rho}}e^{{\rm i}k\rho}M(k\rho,\theta),
	\end{equation}
where $M$ is  a well-behaved function. The PML and PAL techniques enforce the field $U(\rho,\theta)$ to decay exponentially in the layer through the complex coordinate transformation: $\rho=\rho_{_R}(r)+\ri \rho_{_I}(r),$ so it results in the exponential decaying factor from the complex exponential:
$e^{{\rm i}k\rho}= e^{-k\rho_{_I}(r)} e^{{\rm i}k\rho_{_R}(r)}$ in the transformed layer with coordinates $(r,\theta)$. The PAL further diminishes
the oscillations caused by the real part of the transformation  and also uses a substitution.

We design a new layer from a different perspective using a real-valued  mapping $\rho=\rho(r)$ that can dramatically accelerate
the slow decaying factor $1/\sqrt{\rho}= 1/\sqrt{\rho(r)}$ where $\rho(r)$ is an exponential mapping so that the transformed field decays exponentially (see \eqref{ascir} below). Then we introduce a substitution to diminish the oscillation of the RCL-solution in the layer like the PAL technique in \cite{wang2017perfect,yang2019truly}. It is anticipated that the real transformation leads to the RCL-equation of real coefficients which is advantageous  for implementation.  On the other hand, the equation in the layer is not artificial and the computed field can provide an approximation to the  far-field by applying the inverse transformation, which appears
 an important advantage for the use of a real mapping. However, this is not possible for the PML and PAL methods based on the complex transformations.

	The outline of this paper is as follows. In Section \ref{sect2A}, we demonstrate the essential idea for the circular RCL and prove the RCL-solution convergent exponentially to the Helmholtz scattering problem in the reduced domain. In Section \ref{Sect3:RRCL}, we introduce the RCL method with a rectangular layer truncation for the Helmholtz scattering problem and conduct the convergence analysis. We also provide ample numerical results to show the good performance of the proposal technique.

	\section{Proof of concept  via circular RCL}\label{sect2A}
	In this section, we demonstrate the essential idea through the construction of circular RCL, as this is a relatively simpler setting for clarity of exposition and ease of comparisons.
	
	\subsection{Essence of circular RCL}\label{section:idea}
	Let $B_a = \{\hat{\bs x}\in \mathbb{R}^2:|\hat{\bs x}|<a\}$ be a suitable disk that contains the scatterer $D$ and  the support of the source term (see Figure \ref{circlearea} (left), where $\Gamma_a=\partial B_a$). It is known that the solution of the  Helmholtz problem \eqref{helmholtz2eq} exterior to $B_a$ can  be expressed  in the polar coordinates - $(\rho,\theta)$ (see Karp \cite[Theorem 1]{karp1961convergent} and Villamizar et al.
	\cite{Villamizar20}):
	\begin{equation}\label{karpsolution}
		U( \rho,\theta) =  H_0^{(1)}(k \rho)\sum_{l=0}^\infty\frac{F_l(\theta)}{(k\rho)^l} + H_1^{(1)}(k \rho)\sum_{l=0}^\infty\frac{G_l(\theta)}{(k\rho)^l},
	\end{equation}
	which converges uniformly and absolutely for  $\rho>a$ and $\theta\in [0, 2\pi)$. Here the coefficients $\{F_l, G_l\}$ in $\theta$ can be determined recursively by the boundary data at $r=a$ (see   \cite[(6)-(7)]{Villamizar20}). The two  Hankel functions in this Karp's expansion   have the following
representations (see \cite[p. 229]{olver2010nist}):
	\begin{equation*}
		\begin{split}
			&H_0^{(1)}(z) = \Big(\frac{2}{\pi z}\Big)^{1/2}e^{{\rm i}(z-\pi/4)}\Big(1-\frac{1}{8z}{\rm i}+\frac{3}{128z^2}{\rm i}^2+\cdots\Big),\\ 
			&H_1^{(1)}(z) = \Big(\frac{2}{\pi z}\Big)^{1/2}e^{{\rm i}(z-3\pi/4)}\Big(1+\frac{3}{8z}{\rm i}+\frac{3}{128z^2}{\rm i}^2+\cdots\Big), 
		\end{split}
	\end{equation*}
	for $ -\pi+\delta \leq {\rm ph} z\leq 2\pi-\delta$ with some small $\delta>0.$

	\begin{figure}[!htb]
		\centering
		\subfigure[RCL domain]{
			\includegraphics[width=0.23\textwidth]{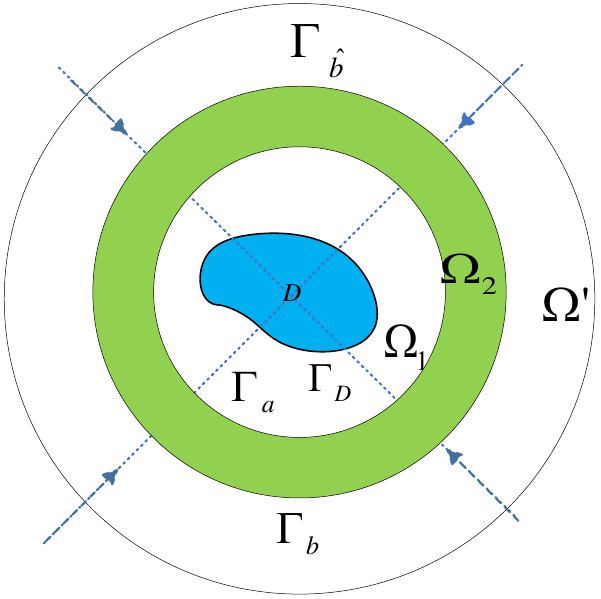}
		} \qquad\qquad
		\subfigure[PML domain]{
			\includegraphics[width=0.23\textwidth]{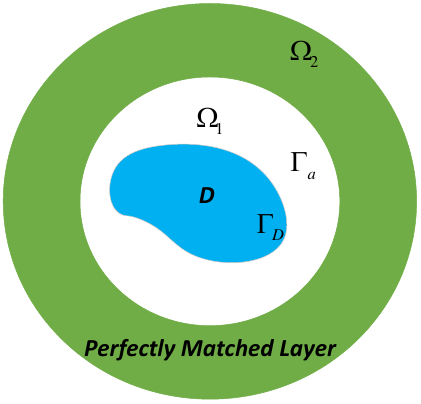}
		}
		\caption{Schematic illustration of the circular  RCL and PML domains.}\label{circlearea} 
	\end{figure}

We rewrite the solution \eqref{karpsolution} as
	\begin{equation} \label{solu}
		U(\rho,\theta)=\sqrt{\frac{2}{\pi k\rho}}e^{{\rm i}k\rho}M(k\rho,\theta),
	\end{equation}
	where
	\begin{equation*}
		\begin{split}
			M&(\rho,\theta) = e^{-\frac{\pi}{4}{\rm i}}\Big\{F_0(\theta)+\frac{1}{\rho}\Big(F_1(\theta)-\frac{\ri}{8}F_0(\theta)\Big)+\frac{1}{\rho^2}\Big(F_2(\theta)-\frac{\ri}{8}F_1(\theta)
			+\frac{3\ri}{128}F_0(\theta)\Big)+\cdots\Big\}\\
			&+e^{-\frac{3\pi}{4}{\rm i}}\Big\{G_0(\theta)+\frac{1}{\rho}\Big(G_1(\theta)+\frac{3\ri}{8}G_0(\theta)\Big)+\frac{1}{\rho^2}\Big(G_2(\theta)+\frac{3\ri}{8}G_1(\theta)+\frac{3\ri}{128}G_0(\theta)\Big)+\cdots\Big\}.
		\end{split}
	\end{equation*}

\medskip
\noindent\underline{\bf Observation}\,
\medskip

In  polar coordinates,   the outgoing wave  \eqref{solu} has a clear separation of {\em decay} and {\em oscillation}. More precisely,
	\begin{itemize}
		\item[(a)] it decays slowly at the rate: $1/\sqrt{k\rho}$\,;
\smallskip
		\item[(b)] it exhibits the oscillatory pattern: $e^{{\rm i}k\rho}$\,;
\smallskip
		\item[(c)] the function $M(k\rho,\theta)$ essentially  has no oscillation, as $k$ only appears in the inverse powers.
	\end{itemize}
	%

\medskip
\noindent\underline{\bf Conceptual comparison: PML versus RCL}\,
\medskip

To recap, the well-known PML technique \cite{berenger1994perfectly} employs a complex coordinate stretching to attenuate all the waves that propagate
from inside of $B_a.$  As with \cite{collino1998perfectly,chen2005adaptive}, let $\alpha(r)=1+\ri \sigma(r),$ and introduce the complex coordinate transform:
\begin{equation}\label{PML-transform}
\rho=\rho(r)= \begin{dcases}
r & \text { if } r \leq a, \\ \int_0^r \alpha(t)\, d t=r \beta(r) & \text { if } r > a,
\end{dcases}
\end{equation}
where the ``absorbing function'' (ABF)
$\sigma(r)\ge 0$ is a continuous function for $r>0$ and  $\sigma(r)\equiv 0$ for $r<a.$ Thus we can write the transformation as
\begin{equation}\label{complrxtransform}
		\rho = \rho^{\rm R}(r)+{\rm i}\rho^{\rm I}(r)=r+\ri  \int_0^r \sigma(t)\, d t.
	\end{equation}
The typical choices of ABF include the polynomials  \cite{collino1998perfectly,chen2005adaptive} and rational functions \cite{bermudez2007optimal}.
Such a transformation enforces the oscillatory factor (b) for $r>a$ in \eqref{solu} delays exponentially as
\begin{equation}\label{decayexp}
		e^{{\rm i}k {\rho}} = e^{{\rm i}k \rho^{\rm R}(r)}e^{-\rho^{\rm I}(r)}={\mathcal O}(e^{-\rho^{\rm I}(r)}).
	\end{equation}
The PAL technique recently proposed by \cite{wang2017perfect,yang2019truly} used  singular rational mappings for both
$\rho^{\rm R}(r)$ and $\rho^{\rm I}(r)$ in \eqref{complrxtransform}.


\smallskip

In distinct contrast with PML, we introduce a {\em real exponential transform}  to render the slow decaying factor (a): $1/\sqrt{k\rho}$ in \eqref{solu} decay exponentially fast in the new coordinates. More precisely, we adopt
	\begin{equation}\label{rrmap}
		\rho = \tau(r):=
		\begin{cases}
			r & \text { if } r\le a, \\
			ae^{\tau_0(r-a)} & \text { if } r> a,
		\end{cases}
	\end{equation}
	but remain the angular variable $\theta\in [0,2\pi)$ unchanged.
	In \eqref{rrmap},  $\tau_0>0$ is a tuning parameter. Formally, the field  \eqref{solu} for $r>a$  is transformed into
	\begin{equation}\label{ascir}
	\begin{split}
		 u(r,\theta) & := U(\rho,\theta)=\sqrt{\frac{2}{k\pi a}}\, e^{-\frac{\tau_0}{2}(r-a)}\,{\rm exp}\big({\rm i}k a\, e^{\tau_0(r-a)}\big)\, M(k\tau(r),\theta)\\
		&=\mathcal O(e^{-\frac{\tau_0}{2}(r-a)}),
		\end{split}
	\end{equation}
	which decays exponentially in $r,$ and
	$$M(k\tau(r),\theta)\sim \big\{F_0(\theta)-G_0(\theta)+\ri (F_0(\theta)+G_0(\theta))\big\}/\sqrt{2},\quad r\gg 1.$$
However,   when $a\, e^{\tau_0(r-a)}>r,$  the oscillation in the neighbourhood  $r\in (a,a+\delta)$ (for some $\delta>0$) may increase.
Nevertheless,  the oscillatory factor can be explicitly extracted as follows
	\begin{equation}\label{asciruv}
		 u(r,\theta) = e^{\ri k\tau(r)}  v(r,\theta)=  e^{\tau_0(r-a)}  v(r,\theta),\quad r>a, \;\; \theta\in [0,2\pi),
	\end{equation}
	where $ v(r,\theta)$ decays exponentially without essential oscillations.
		\begin{figure}[!t]
	\centering
\subfigure[{${\rm Re}(U(\rho,0))$ with $\rho\in [2,24]$}]{
	\includegraphics[width=0.40\textwidth]{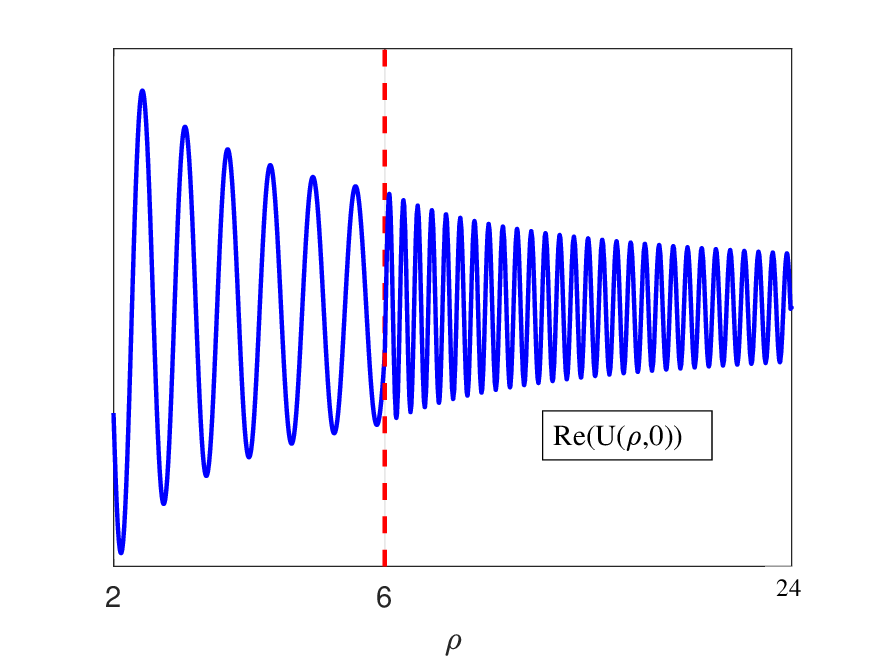}
}\qquad
\subfigure[{${\rm Re}(U(\rho,\pi/4))$ with $\rho\in [2,24]$}]{
	\includegraphics[width=0.40\textwidth]{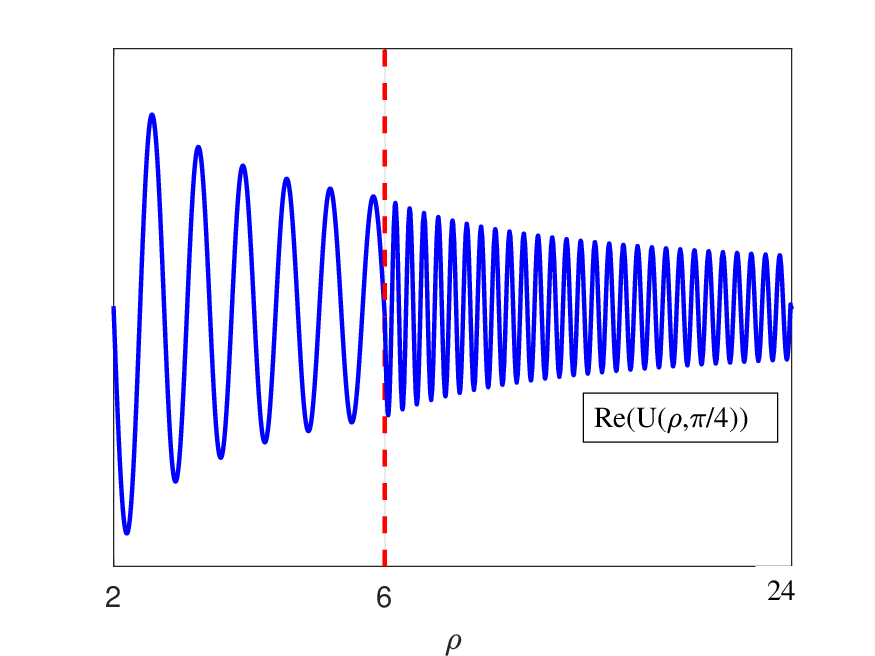}
}\\
	\subfigure[{${\rm Re}\{u(r,0),v(r,0)\}$ with $r\in [2,12]$}]{
		\includegraphics[width=0.40\textwidth]{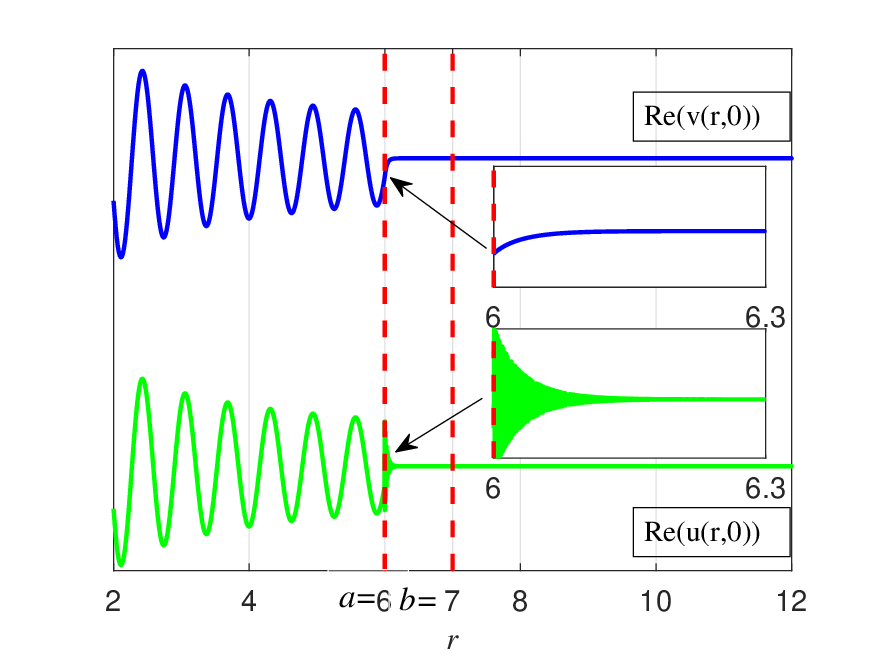}
	}\qquad
	\subfigure[{${\rm Re}\{u(r,\pi/4),v(r,\pi/4)\}$ with $r\in [2,12]$}]{
		\includegraphics[width=0.40\textwidth]{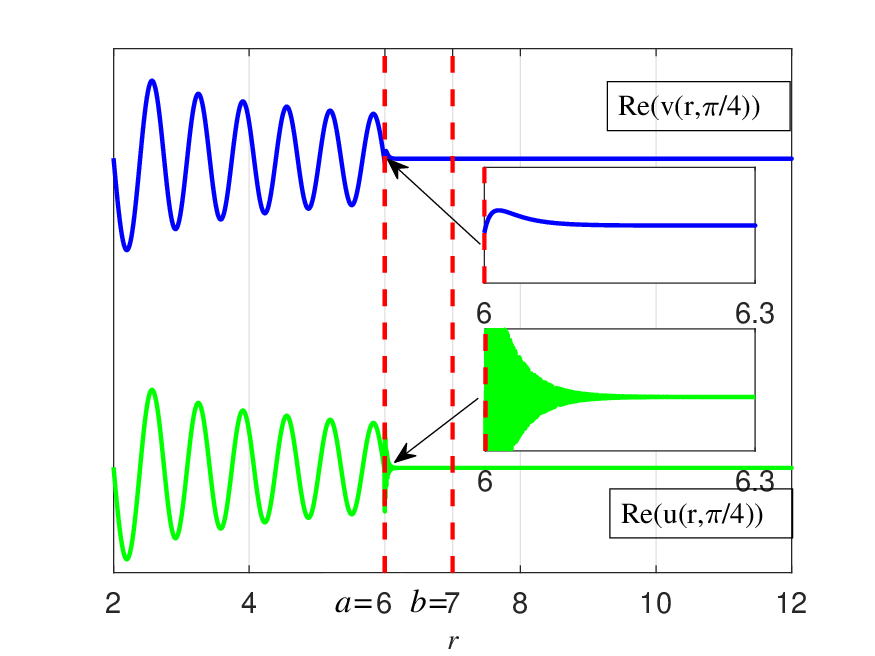}
	}
	\caption{Profiles of the real part of $U(\rho,\theta), u(r,\theta), v(r,\theta)$ with $\theta=0,\pi/4$.}\label{figexact1drpml}
\end{figure}

	As an  illustrative example, we consider the exterior Helmholtz problem \eqref{helmholtz2eq}  with a circular scatterer $D$ with radius $R>0$
and	the planar  incident wave $ g (\theta)= e^{{\rm i}k R\cos \theta},$ which admits the series solution (cf. \cite{yang2019truly}):
	\begin{equation}\label{solutioncir}
		U(\rho,\theta) = -\sum_{|n|=0}^{\infty}\frac{{\rm i}^{n}J_n(k R)}{H_n^{(1)}(kR)}H_n^{(1)}(k\rho)e^{{\rm i} n\theta}.
	\end{equation}
In Figure \ref{figexact1drpml} (a)-(b), we plot the profiles of $U(\rho,\theta)$ with $R=2, a=6, \tau_0=60, k=10$ and $\theta=0,\pi/4,$ where we use  different coordinate scalings for $\rho\in [R,a]$ and $\rho>a$ to show the oscillation for the comparison with
compare with the profiles of  ${u}(r,\theta)$ and ${v}(r,\theta)$ for $r>a$  in
Figure \ref{figexact1drpml} (c)-(d). Note that  $U(\rho,\theta)=u(r,\theta)=v(r,\theta)$ in the annulus $\Omega_1=\{R<r<a\}$ (where $\rho=r$),
but they behave very differently when $r>a.$ The profiles clearly show that $U(\rho,\theta)$ decays  slowly in $\rho,$
but $u(r,\theta), v(r,\theta)$ decays exponentially fast in $r.$  Due to the compression, $u(r,\theta)$ has big oscillations near $r=a,$
  but thanks to \eqref{asciruv},  $v(r,\theta)$ essentially has no oscillation.

	\subsection{The RCL-equation}
   Using the coordinate  transformation \eqref{rrmap}, we can convert the original Helmholtz equation \eqref{helmholtz2eq}  (exterior to the disk $B_a$ in  polar coordinates $(\rho,\theta)$) into the following problem
	in the new coordinates  ${\bs x}=(r\cos\theta,r\sin\theta)$:  
	\begin{align}\label{RPMLeq}
		-\mathbb{J}^{-1}\nabla\cdot\left({\bs C}\,\nabla {u}(\bs x)\right)-k^2 {u}(\bs x) = 0 \quad\hbox{in}\;\;\; \Omega^e=\mathbb R^2\setminus  \bar B_a,
	\end{align}
	supplemented with the same Dirichlet boundary condition on $\Gamma_{D}$ and  far-field condition transformed from \eqref{helmholtz2bc2}.
Here, the coefficient matrix ${\bs C} = {\bs J}^{-1}({\bs J}^{-1})^\top{\mathbb J},$ where $\bs J$ is the Jacobian matrix, $\mathbb J={\rm det}(\bs J)$ is the Jacobian and they can be computed from  \eqref{rrmap} readily as
	\begin{equation}\label{invjacobianr}
	\begin{split}
		& {\bs J}^{-1}=\frac{1}{\tau'} {\bs R}_{\theta}\begin{pmatrix}
			1 & 0\\
			0 & r\tau'/\tau
		\end{pmatrix} {\bs R}_{\theta}^{\top},\quad
			{\bs R}_{\theta}=\begin{pmatrix}
			\cos\theta & \sin\theta \\
			-\sin\theta & \cos\theta
		\end{pmatrix},\\
	&\mathbb{J}^{-1}=\frac 1 {{\rm det}(\bs J)}=\frac{r}{\tau\tau'},\quad \tau'=\frac{\rm d\tau} {{\rm d} r}=\tau'(r).
	\end{split}
	\end{equation}

 In view of \eqref{ascir},   {\em the solution  $u(r,\theta)$ of the transformed Helmholtz problem \eqref{RPMLeq} must  decay exponentially}  {\rm(}due to the factor $e^{-\frac{\tau_0}{2}(r-a)}$ in \eqref{ascir}{\rm).} This motivates us to truncate \eqref{RPMLeq}  directly
by a disk $B_b$ with a suitable radius $b>a,$  and impose the  homogeneous Dirichlet boundary condition at $r=b,$ which leads to the {\em  Helmholtz equation reduced by  the RCL technique} or simply the  {\bf RCL-equation}: 
	\begin{subequations} \label{RCL}
		\begin{align}
			& -\nabla\cdot\left({\bs C}\nabla \hat{u}(\bs x)\right)-k^2 n(\bs x) \hat{u}(\bs x)= 0\quad\hbox{in}\;\; \Omega:= B_b\setminus \bar{D}, \label{RCLeq1}\\
			& \hat{u}(\bs x) = g\quad\hbox{on} \;\; \partial D; \quad  \hat{u}(\bs x) =0\quad\hbox{on}\;\; \Gamma_b:=\partial B_b,\label{BC2}
		\end{align}
	\end{subequations}
	together with the usual transmission conditions at $r=a.$
As shown in Figure \ref{circlearea} (left), the computational domain consists of $\Omega=\Omega_1\cup \Omega_2$, where $\Omega_1$ is the domain of interest in simulating the scattering wave and  $\Omega_2=\{a<| {\bs x}|<b\}$ is the circular  RCL layer.  Note that
in $\Omega_2,$   $n=\mathbb J$ and $\bs C$ are given in \eqref{RPMLeq}-\eqref{invjacobianr}, while in $\Omega_1,$ $\bs C, n$ are inherited from the original Helmholtz equation in $\Omega^e.$ 


\begin{remark}\label{Rmk:realcoef}
{\em Different from the PML techniques based upon complex coordinate transformations, the variable coefficients here are all real-valued in the layer.  Moreover,  the field in the RCL is not fictitious that can provide a good approximation to the original field exterior to $\Gamma_a.$}
\end{remark}

	\subsection{Convergence analysis}
	We show that the $H^1$-error  between
	the solution $ \hat u({\bs x})$  of the boundary value problem \eqref{RCL}
	and the compressed scattering field $u({\bs x})$ of \eqref{RPMLeq} is exponentially small. This implies the non-reflectiveness of the truncation  and is essential for the success of this new  technique. 
	
	As some preparations, we first  derive the following uniform bounds for the ratios of  Hankel functions.
	\begin{lemma}\label{Hankel}
		For $\rho>a>0$ and $ka>1$, we have
		\begin{equation}
			\;\;\left|\frac{H_0^{(1)}(k \rho)}{H_0^{(1)}(k a)}\right|\leq 4\sqrt{\frac{a}{\rho}};  \;\quad 		
			\left|\frac{H_n^{(1)}(k \rho)}{H_n^{(1)}(k a)}\right|\leq \sqrt{\frac{a}{\rho}},\quad n= \pm 1, \pm 2,\cdots. \label{hanke1kn0}
		\end{equation}
	\end{lemma}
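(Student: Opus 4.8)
The plan is to estimate the ratio $H_n^{(1)}(k\rho)/H_n^{(1)}(ka)$ by exploiting the monotonicity properties of $|H_n^{(1)}(t)|$ and $t\,|H_n^{(1)}(t)|^2$ for real argument $t>0$. Recall the standard fact (obtainable from the Wronskian/Nicholson-type integral representation) that for real $t>0$ the function $t\mapsto t\,\big(J_n(t)^2+Y_n(t)^2\big)=t\,|H_n^{(1)}(t)|^2$ is monotonically decreasing on $(0,\infty)$ for each fixed integer $n$. Granting this, for $\rho>a$ one has $k\rho\,|H_n^{(1)}(k\rho)|^2\le ka\,|H_n^{(1)}(ka)|^2$, which rearranges at once to $|H_n^{(1)}(k\rho)/H_n^{(1)}(ka)|^2\le a/\rho$, i.e.\ the bound with constant $1$. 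This should handle all $n\neq 0$ directly, and in fact would also give the constant $1$ for $n=0$; the looser constant $4$ quoted for $n=0$ suggests the authors instead argue through the asymptotic expansion and want a self-contained elementary estimate, so I would present a second route as well.

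The alternative route, which is what I would actually write out, uses the Hankel asymptotic expansions quoted just above the lemma:
\begin{equation*}
H_n^{(1)}(z)=\Big(\frac{2}{\pi z}\Big)^{1/2}e^{{\rm i}(z-n\pi/2-\pi/4)}\big(1+R_n(z)\big),
\end{equation*}
together with the explicit error bound $|R_n(z)|\le 2|a_1(n)|\,z^{-1}e^{|n^2-1/4|z^{-1}}$ from \cite[\S10.17]{olver2010nist}, where $a_1(n)=(4n^2-1)/8$. Writing the ratio as
\begin{equation*}
\frac{H_n^{(1)}(k\rho)}{H_n^{(1)}(ka)}=\sqrt{\frac{a}{\rho}}\;e^{{\rm i}k(\rho-a)}\;\frac{1+R_n(k\rho)}{1+R_n(ka)},
\end{equation*}
so that the modulus is $\sqrt{a/\rho}$ times $|1+R_n(k\rho)|/|1+R_n(ka)|$. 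For $n\neq 0$ one has $a_1(n)=0$ only when $n=\pm\frac12$, which never occurs for integer $n\ge1$; more usefully, for $|n|\ge1$ the real part $\mathrm{Re}(1+R_n)$ stays bounded below by a positive constant while $|R_n(k\rho)|\to0$ as $\rho\to\infty$, and a short computation with $ka>1$ shows the quotient $|1+R_n(k\rho)|/|1+R_n(ka)|\le 1$, giving the claimed bound. For $n=0$ the denominator $|1+R_0(ka)|$ can be bounded below using the exact series for $R_0$ (as in the commented-out lemma: $\mathrm{Re}(1+R_0)\ge 1-\tfrac{3}{128}>0$ and hence $|1+R_0(ka)|\ge 1-\tfrac{1}{4ka}e^{1/(4ka)}\ge \tfrac14$ once $ka>1$), while the numerator satisfies $|1+R_0(k\rho)|\le 1+\tfrac{1}{4k\rho}e^{1/(4k\rho)}\le 1+\tfrac14 e^{1/4}<2$; multiplying these gives the factor $4$ in front of $\sqrt{a/\rho}$.

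The main obstacle is pinning down the uniform lower bound $|1+R_n(ka)|\ge c_n>0$ with a constant that does \emph{not} degrade as $n$ grows, since the naive bound $|R_n(z)|\le 2|a_1(n)|z^{-1}e^{|n^2-1/4|z^{-1}}$ blows up in $n$ for fixed $z$ and is useless when $ka$ is only assumed $>1$. The clean way around this is precisely the monotonicity argument of the first paragraph applied to $t\,|H_n^{(1)}(t)|^2$, which is uniform in $n$ and gives constant $1$ for every $n\ge1$ without touching asymptotics; I would use that for $n\neq0$ and reserve the elementary asymptotic estimate only for the single case $n=0$, where $ka>1$ is comfortably in the asymptotic regime and the constant $4$ is an easy by-product. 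I expect the write-up to be short: cite the decreasing-monotonicity of $t(J_n^2+Y_n^2)(t)$ for the $n\neq0$ estimates, and do the two-line $n=0$ bound by hand.
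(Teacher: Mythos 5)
Your treatment of $n\neq 0$ is exactly the paper's: both arguments invoke the monotone decrease of $t\,|H_n^{(1)}(t)|^2$ on $(0,\infty)$ (Watson) to obtain the constant-$1$ bound $\sqrt{a/\rho}$. For $n=0$ you take a genuinely different, more direct route: you control $|1+R_0(k\rho)|/|1+R_0(ka)|$ straight from the DLMF remainder estimates at both arguments. The paper instead factors the ratio as $\bigl(H_0^{(1)}(k\rho)/H_1^{(1)}(k\rho)\bigr)\bigl(H_1^{(1)}(k\rho)/H_1^{(1)}(ka)\bigr)\bigl(H_1^{(1)}(ka)/H_0^{(1)}(ka)\bigr)$, uses monotonicity at order $1$ for the middle factor, the inequality $|H_0^{(1)}(x)|\le |H_1^{(1)}(x)|$ for the first, and the asymptotic remainder only once, to show $|H_1^{(1)}(ka)/H_0^{(1)}(ka)|\le 4$ for $ka>1$. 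Both routes work; yours needs remainder control at $k\rho$ as well as $ka$ (harmless, since $k\rho>ka>1$), while the paper's confines the asymptotics to a single point.

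Two corrections are needed. First, your parenthetical claim that the monotonicity argument ``would also give the constant $1$ for $n=0$'' is false: $t\,(J_0^2+Y_0^2)(t)$ is \emph{increasing} on $(0,\infty)$ (tending to $2/\pi$); the decrease of $t\,|H_\nu^{(1)}(t)|^2$ holds only for order $\nu>1/2$. This is exactly why the lemma carries the weaker constant $4$ at $n=0$ and why the paper detours through $H_1^{(1)}$ --- it is not a stylistic choice of the authors. You do not actually use this remark, so your proof stands, but the misconception should be removed. Second, your stated constants do not yield $4$: a numerator bound $<2$ against a denominator bound $\ge \tfrac14$ gives $8$. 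The fix is immediate: for $ka>1$ one has $|R_0(ka)|\le \tfrac{1}{4ka}e^{1/(4ka)}\le \tfrac14 e^{1/4}<0.33$, hence $|1+R_0(ka)|\ge 1-\tfrac14 e^{1/4}>\tfrac12$, and together with $|1+R_0(k\rho)|\le 1+\tfrac14 e^{1/4}<2$ the ratio is indeed $<4$.
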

	\begin{proof} In view of the identity $H_{-n}^{(1)}(z) = e^{{\rm i} n\pi}H_n^{(1)}(z)$, we only need to prove \eqref{hanke1kn0} for
		positive integer $n.$  According to \cite[p. 446]{watson1995treatise}, the function  $x|H_n^{(1)}(x)|^2$ with $n\ge 1$ is a strictly decreasing on $ (0,\infty)$, so we have
		\begin{equation}
			\bigg|\frac{H_n^{(1)}(k \rho)}{H_n^{(1)}(k a)}\bigg|^2=\frac{k \rho|H_n^{(1)}(k \rho)|^2}{k a|H_n^{(1)}(k a)|^2}\frac{k a}{k\rho}\leq \frac{a}{\rho},\quad n\geq 1,\label{hanke1kn}
		\end{equation}
		which leads to the second bound for $n\geq 1$.
		
		For  $n=0$, we rewrite the ratio and use the estimate \eqref{hanke1kn} with $n=1$ to obtain
		\begin{eqnarray}
			\left|\frac{H_0^{(1)}(k \rho)}{H_0^{(1)}(k a)}\right|=\left|\frac{H_0^{(1)}(k \rho)}{H_1^{(1)}(k \rho)}\right|\left|\frac{H_1^{(1)}(k \rho)}{H_1^{(1)}(k a)}\right|\left|\frac{H_1^{(1)}(k a)}{H_0^{(1)}(k a)}\right|\leq \left|\frac{H_0^{(1)}(k \rho)}{H_1^{(1)}(k \rho)}\right|\left|\frac{H_1^{(1)}(k a)}{H_0^{(1)}(k a)}\right|\sqrt{\frac{a}{\rho}}.\label{hankelrationest}
		\end{eqnarray}
		Recall the property (cf. \cite{watson1995treatise,chen2005adaptive}):
		$$|H_{n-1}^{(1)}(x)|\leq |H_n^{(1)}(x)|,\quad x>0,\quad n\geq 1,$$
		so we have
		\begin{equation}
			\left|\frac{H_{0}(k\rho)}{H_1(k\rho)}\right|\leq 1.\label{hankelk021}
		\end{equation}
		For the second ratio, we employ the expansion of the Hankel function (cf. \cite[10.17.13]{olver2010nist}):
		\begin{equation}
			\begin{split}
				&H_0^{(1)}(x) = \Big(\frac{2}{\pi x}\Big)^{1/2}e^{{\rm i}(x-\pi/4)}(1+R_1(0,x)),\nonumber\\
				&H_1^{(1)}(x) = \Big(\frac{2}{\pi x}\Big)^{1/2}e^{{\rm i}(x-3\pi/4)}(1+R_1(1,x)),\nonumber
			\end{split}
		\end{equation}
		and the estimates (cf. \cite[10.17.14]{olver2010nist}):
		\begin{equation*}
			|R_1(n,x)|\leq 2|a_1(n)|x^{-1}e^{|n^2-1/4|x^{-1}},\quad a_1(0)=-\frac{1}{8},\quad a_1(1)=\frac{3}{8},\label{lemma12}
		\end{equation*}
		to obtain
		\begin{equation}
			\left|\frac{H_1^{(1)}(x)}{H_0^{(1)}(x)}\right|\leq \frac{1+2|a_1(1)|{x^{-1}}e^{\frac{3}{4x}}}{1-2|a_1(0)|{x^{-1}}e^{\frac{1}{4x}}}=\frac{4x+3e^{\frac{3}{4x}}}{4x-e^{\frac{1}{4x}}}\leq 4, \quad \forall x>1. \label{hankel01est}
		\end{equation}
		In view of \eqref{hankelk021}-\eqref{hankel01est}, we obtain the desired bound for $n=0$ from
		\eqref{hankelrationest} immediately.
	\end{proof}

	For $f\in L^2(\Gamma_c)$  on the circle $\Gamma_c=\{|{\bs x}|=c\}$, we define the Sobolev (trace) norm
	\begin{equation}
		\|f\|_{H^{s}(\Gamma_c)}^2 = \sum_{|n|=0}^{\infty}(1+n^2)^{s}|\hat{f}_n|^2,\quad \hat{f}_n = \frac{1}{2\pi}\int_0^{2\pi}f(c\cos\theta,c\sin\theta) e^{-{\rm i} n\theta}d\theta,
		\label{negnorm1}
	\end{equation}
	for $s\ge 0.$

	We have the following error bounds, which provide us a practical rule to choose $\tau_0$ and the width of the layer to ensure $e^{-\frac{1}{2}\tau_0(b-a)}\ll 1.$
	\begin{thm}\label{cirpcl}
		The RCL-equation \eqref{RCL} has a unique solution $\hat{u}(\bs x)$. Moreover, the error  between  $\hat{u}(\bs x)$ and the mapped solution $ u(\bs x)$ of
		\eqref{RPMLeq} in the $H^1$-norm decays exponentially as follows
		\begin{equation}
			\|u-\hat{u}\|_{H^1(\Omega)}  \leq C\max\{\tau_0 b,(a\tau_0)^{-1}\}e^{-\frac{1}{2}\tau_0(b-a)}\|U\|_{H^{1/2}(\Gamma_a)},\label{cirest1}
		\end{equation}
		where the parameter $\tau_0>1$ and  $C$ is a positive constant independent of $u$.
	\end{thm}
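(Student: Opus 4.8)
The plan is to follow the now-standard variational route for PML-type convergence proofs (as in \cite{lassas1998existence,chen2005adaptive,yang2019truly}), but adapted to the real exponential map \eqref{rrmap}. First I would reduce everything to the annular layer $\Omega_2=\{a<|{\bs x}|<b\}$ by using a Dirichlet-to-Neumann (DtN) operator on $\Gamma_a$: since both $u$ and $\hat u$ satisfy the \emph{same} equation in $\Omega_1$, the difference $u-\hat u$ is governed there by the original Helmholtz operator with data on $\Gamma_a$, so a trace/lifting argument bounds $\|u-\hat u\|_{H^1(\Omega_1)}$ by $\|u-\hat u\|_{H^{1/2}(\Gamma_a)}$ plus the interface flux jump. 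Thus the whole problem is: estimate the error generated purely by truncating the layer equation \eqref{RPMLeq} at $r=b$ with a homogeneous Dirichlet condition.

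Next, I would exploit the \emph{explicit separated series}. Expanding $U$ on $\Gamma_a$ in the Fourier basis $e^{{\rm i}n\theta}$ with coefficients $\hat U_n$, the exact mapped solution in the layer is
\begin{equation}\label{prop:exactlayer}
 u(r,\theta)=\sum_{|n|=0}^\infty \hat U_n\,\frac{H_n^{(1)}(k\tau(r))}{H_n^{(1)}(ka)}\,e^{{\rm i}n\theta},\qquad \tau(r)=ae^{\tau_0(r-a)},
\end{equation}
while the truncated solution $\hat u$ replaces each radial factor by the solution of the corresponding ODE on $(a,b)$ vanishing at $r=b$; call it $\phi_n(r)$. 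Then $u-\hat u$ is, mode by mode, $\hat U_n\big(H_n^{(1)}(k\tau(r))/H_n^{(1)}(ka)-\phi_n(r)\big)$, and by linearity of the ODE the correction $\psi_n:=H_n^{(1)}(k\tau(r))/H_n^{(1)}(ka)-\phi_n(r)$ is itself a solution of the homogeneous radial equation with $\psi_n(a)=0$ and $\psi_n(b)=H_n^{(1)}(k\tau(b))/H_n^{(1)}(ka)$. Since the space of solutions vanishing at $r=a$ is one-dimensional and spanned by the ``other'' radial solution (built from $H_n^{(2)}$ or equivalently $J_n$), I can write $\psi_n$ explicitly in terms of that second solution normalized to match at $r=b$; its size is therefore controlled by $|H_n^{(1)}(k\tau(b))/H_n^{(1)}(ka)|$, which by Lemma \ref{Hankel} is bounded by $4\sqrt{a/\tau(b)}=4\sqrt{a/(ae^{\tau_0(b-a)})}=4e^{-\frac12\tau_0(b-a)}$ for $n=0$ and by $e^{-\frac12\tau_0(b-a)}$ for $n\neq 0$. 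This is precisely where the exponential factor $e^{-\frac12\tau_0(b-a)}$ enters, and it is uniform in $n$.

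The remaining work is to turn the pointwise modal bound into an $H^1(\Omega_2)$ bound and to handle the weighted $L^2$ gradient terms coming from the (now real, but degenerate-at-large-$r$) coefficient matrix $\bs C$ in \eqref{invjacobianr}. For this I would set up the weak formulation of \eqref{RCL} in the weighted space, establish coercivity/inf-sup via a Gårding inequality plus a compactness-uniqueness argument — using that the only solution with zero data is zero because the mapped equation inherits the Sommerfeld/outgoing selection and the exponential decay — which simultaneously gives \emph{existence and uniqueness} of $\hat u$; then a standard Céa-type or Aubin–Nitsche estimate reduces $\|u-\hat u\|_{H^1(\Omega)}$ to the trace mismatch on $\Gamma_b$, namely $\|u|_{\Gamma_b}\|_{H^{1/2}(\Gamma_b)}$, which by the Parseval identity \eqref{negnorm1} and the uniform modal bound is $\le C e^{-\frac12\tau_0(b-a)}(\sum_n(1+n^2)^{1/2}|\hat U_n|^2)^{1/2}=Ce^{-\frac12\tau_0(b-a)}\|U\|_{H^{1/2}(\Gamma_a)}$. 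Tracking the constants through the Jacobian factors $\tau'=a\tau_0 e^{\tau_0(r-a)}$ and $r\tau'/\tau=r\tau_0$ in $\bs C$ produces the prefactor $\max\{\tau_0 b,(a\tau_0)^{-1}\}$: the $\tau_0 b$ comes from differentiating the oscillatory/exponential radial factors (chain rule brings down $\tau'$, worst at $r\sim b$), and the $(a\tau_0)^{-1}$ from the lower bound on $\tau'$ near $r=a$ when inverting the coercivity constant.

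The main obstacle I anticipate is establishing the well-posedness (inf-sup/coercivity) of the truncated layer problem uniformly enough to extract the clean constant: the coefficient matrix $\bs C$ has one entry behaving like $1/\tau'=\mathcal O(e^{-\tau_0(r-a)})\to 0$ and another like $r\tau'/\tau = r\tau_0$, so the bilinear form is neither uniformly elliptic nor of a textbook degenerate type, and the $k^2 n\hat u$ term with $n=\mathbb J=\tau\tau'/r$ growing exponentially must be absorbed carefully. I would handle this by working in the natural weighted energy space suggested by \eqref{ascir}–\eqref{asciruv} — effectively conjugating out the known factor $e^{{\rm i}k\tau(r)}e^{-\frac{\tau_0}{2}(r-a)}$, i.e. working with $v$ rather than $u$ — so that the transformed bilinear form becomes a compact perturbation of a coercive one, and uniqueness then follows from the explicit series \eqref{prop:exactlayer} together with the fact that $\phi_n$ cannot vanish identically (its value at $r=a$ is nonzero). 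Everything else is bookkeeping with the Hankel ratio estimates of Lemma \ref{Hankel}.
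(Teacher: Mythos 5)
Your outline contains two genuine gaps, and both are avoidable because the paper takes a much more economical route. First, the well-posedness/uniqueness step: you assert that ``the only solution with zero data is zero because the mapped equation inherits the Sommerfeld/outgoing selection and the exponential decay.'' This is not true after truncation. Once you impose $\hat u=0$ on $\Gamma_b$, the RCL problem is a real-coefficient boundary value problem on a bounded domain; pulling it back through $\tau^{-1}$ (this is exactly what the paper does) shows it is \emph{identical} to the Dirichlet problem for $-\Delta-k^2$ on $\Omega'=\tau(\Omega)$, whose homogeneous problem has nontrivial solutions whenever $k^2$ is a Dirichlet eigenvalue of $-\Delta$ on $\Omega'$. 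No outgoing condition survives the truncation, so your G\aa rding-plus-compactness argument can at best give a Fredholm alternative, and uniqueness still requires the non-resonance hypothesis that the paper states explicitly (``suppose $k^2$ is not a Dirichlet eigenvalue''). Second, the mode-by-mode reduction in the layer is not the right error equation: Theorem \ref{cirpcl} is stated for a general Lipschitz scatterer $D$, so the truncated problem does not decouple in Fourier modes (they couple through $\Gamma_D$ and the transmission conditions at $\Gamma_a$); even for a circular scatterer the error $u-\hat u$ does \emph{not} vanish on $\Gamma_a$, so your two-point problem for $\psi_n$ on $(a,b)$ with $\psi_n(a)=0$ misstates the interface data; and the claim that $\psi_n$ is controlled by its boundary value $\psi_n(b)$ uniformly in $n$ is itself a stability estimate for an indefinite radial Helmholtz BVP that can blow up at interior resonances — it does not follow from Lemma \ref{Hankel} alone.

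By contrast, the paper's proof bypasses the degenerate-coefficient variational analysis you flag as the ``main obstacle'': transform $\hat u$ back to physical coordinates, so the RCL problem becomes the plain Helmholtz Dirichlet problem \eqref{RPMLlayerorgin} on $\Omega'$ with outer boundary $\Gamma_{\hat b}$ of radius $\hat b=\tau(b)=a e^{\tau_0(b-a)}$; the error $e=U-\widehat U$ solves the same interior problem with data $U|_{\Gamma_{\hat b}}$ on the outer boundary, so the standard stability bound gives $\|e\|_{H^1(\Omega')}\le \widehat C\,\|U\|_{H^{1/2}(\Gamma_{\hat b})}$, and Lemma \ref{Hankel} applied to the series \eqref{seriesSolu} yields $\|U\|_{H^{1/2}(\Gamma_{\hat b})}\le C\sqrt{a/\tau(b)}\,\|U\|_{H^{1/2}(\Gamma_a)}=C e^{-\frac12\tau_0(b-a)}\|U\|_{H^{1/2}(\Gamma_a)}$ — this is where the exponential factor really comes from (your use of the Hankel ratio at $\tau(b)$ captures the same mechanism, so that part of your intuition is right). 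The prefactor $\max\{\tau_0 b,(a\tau_0)^{-1}\}$ is then obtained not from coercivity constants but from the explicit change of variables relating $\|e\|_{H^1(\Omega'\setminus\Omega_1)}$ to $\|u-\hat u\|_{H^1(\Omega_2)}$, using $\partial r/\partial\rho=1/(\tau_0\tau)$ and $\tau(r)\ge r$, which produces the lower bound $\min\{\tau_0^{-1}b^{-1},\tau_0 a\}$ whose inversion gives the stated constant. If you want to salvage your route, you would have to prove an inf-sup estimate for the degenerate bilinear form and a modal stability bound uniform in $n$, both of which are substantially harder than the pullback argument and still cannot remove the eigenvalue restriction.
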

	\begin{proof}
 We first transform 
 the RCL-equation \eqref{RCL} back to the $(\hat{x}, \hat{y})$-coordinates
 by \eqref{rrmap} as 
 \begin{equation} \label{RPMLlayerorgin}
			-{\Delta}\widehat{U}-k^2\widehat{U} = 0\quad\hbox{in}\quad\Omega'; 
			\quad\widehat{U} = g\quad\hbox{on}\quad\partial D; \quad \widehat{U} =0\quad\hbox{on}\quad\Gamma_{\hat {b}},
		\end{equation}
 where we denoted  $\widehat{U}(\rho,\theta): = \hat{u}(\tau^{-1}(\rho),\theta)$  and
		$$ \hat{b} = \tau(b), \quad \Gamma_{\hat {b}} = \big\{{\bs \hat{\bs x}}\in \mathbb R^2:|{\bs {\hat x}}|=\hat{b}\big\},\quad  \Omega'=\tau(\Omega).$$
 Suppose that $k^2$ is not a Dirichlet eigenvalue of $-\Delta$.
It is known that  the problem  \eqref{RPMLlayerorgin}  has a unique weak solution in $ H^1(\Omega')$  (cf. \cite{griesmaier2011error,mclean2000strongly}) satisfying
		\begin{equation}
			\|\widehat{U}\|_{H^1(\Omega')}\leq \widehat C\|g\|_{H^{1/2}(\partial D)},\label{regur}
		\end{equation}
		where the positive constant $\widehat C$ only  depends on $k$ and $|\Omega'|$.	This implies the  existence  and uniqueness of the solution to the RCL-equation \eqref{RCL}.
		
		Subtracting \eqref{RPMLlayerorgin} from \eqref{helmholtz2eq}  leads to  the error equation
		\begin{equation} \label{errorRPMLlayerorgin}
			-{\Delta}e-k^2e = 0\quad\hbox{in}\quad\Omega';
			\quad e = 0\quad\hbox{on}\quad\partial D; \quad e = U\quad\hbox{on}\quad\Gamma_{\hat {b}},
		\end{equation}
where  $e = U({\hat{\bs x}}) - \widehat{U}(\hat{\bs x}).$ Then we infer from \eqref{regur} that
	\begin{align}
		\|e\|_{H^1(\Omega')}\leq \widehat C\|U\|_{H^{1/2}(\Gamma_{\hat{b}})}.\label{estviabdrydata} 
	\end{align}
On the other hand,  the Helmholtz equation \eqref{helmholtz2eq} exterior to $B_a$ admits the series solution  in polar coordinates (cf. \cite{nedelec2013acoustic}):
	\begin{equation}\label{seriesSolu}
		U(\rho,  \theta)=\sum_{|m|=0}^\infty A_m \frac{H_m^{(1)}(k\rho)}{H_m^{(1)}(k a)} e^{\ri m \theta},\quad
A_m: = \frac{1}{2\pi}\int_0^{2\pi}U(a,\theta)e^{-{\rm i}m\theta}d\theta,
	\end{equation}
	for given Dirichlet data $U|_{\rho=a},$
	where  $H_m^{(1)}(\cdot)$ is the Hankel function of the first kind of order $m$.
		Using \eqref{negnorm1} and Lemma \ref{Hankel}, we derive from \eqref{seriesSolu} that
		\begin{equation}\label{qest}
			\begin{split}
				\|U|_{\Gamma_{\hat{b}}}\|_{H^{1/2}(\Gamma_{\hat{b}})}^2 & = 2\pi\sum_{m= -\infty}^\infty(1+m^2)^{1/2}\, \bigg|\frac{H_m^{(1)}(k \tau(b))}{H_m^{(1)}(k a)}\bigg|^2\, |{A}_m|^2 \\
				&\leq 2\pi\sum_{m= -\infty}^\infty(1+m^2)^{1/2}\bigg|\frac{H_m^{(1)}(k \tau(b))}{H_m^{(1)}(k a)}\bigg|^2|{A}_m|^2 \\
				&\leq  \frac{32\pi a}{\tau(b)}\|U|_{\Gamma_{{a}}}\|_{H^{1/2}(\Gamma_a)}^2.
			\end{split}
		\end{equation}
		As $U(a,\theta)=u(a,\theta)$,   we obtain from \eqref{ascir} and \eqref{estviabdrydata}-\eqref{qest} that
		\begin{equation}
			\|e\|_{H^1(\Omega')}\leq 4\pi\sqrt{\frac{2 a}{\tau(b)}}\|U\|_{H^{1/2}(\Gamma_a)}
			= 4\sqrt{2}\pi e^{-\frac{1}{2}\tau_0(b-a)}\|U\|_{H^{1/2}(\Gamma_a)}.\label{erorPML1}
		\end{equation}
		As the mapping $\tau(r)$ is the identity mapping for all $\hat{\bs x}\in\Omega_1=\{\hat{\bs x}\in {\mathbb R}^2:|\hat{\bs x}|\in[R,a]\}$, we have
		\begin{align}
			\|u-\hat{u}\|_{H^1(\Omega_1)} =\|U-\widehat{U}\|_{H^1(\Omega_1)} \leq \|e\|_{H^1(\Omega')} \leq C\widehat C e^{-\frac{1}{2}\tau_0(b-a)}\|U\|_{H^{1/2}(\Gamma_a)}.\label{addest1}
		\end{align}
		On the other hand, from the relations between the coordinates $(x, y)$ and $(\hat x, \hat y)$, we derive
		\begin{equation*}
			\begin{split}
				\|e\|_{H^1(\Omega'\setminus\Omega_1)}^2&=\int_0^{2\pi}\int_a^{\hat{b}}\left[\Big(\frac{\partial e}{\partial \rho}\Big)^2+\Big(\frac{1}{\rho}\frac{\partial e}{\partial\theta}\Big)^2+e^2\right]\rho d\rho d\theta\nonumber\\
				&=\int_0^{2\pi}\int_a^b\left[\Big(\frac{\partial(u-\hat{u})}{\partial r}\frac{\partial r}{\partial \rho}\Big)^2+\Big(\frac{1}{\tau(r)}\frac{\partial(u-\hat{u})}{\partial\theta}\Big)^2+(u-\hat{u})^2\right]\tau_0 \tau^2(r)drd\theta\nonumber\\
				&=\int_0^{2\pi}\int_a^b\left[\frac{1}{\tau_0 r}\Big(\frac{\partial(u-\hat{u})}{\partial r}\Big)^2+\tau_0 r \Big(\frac{1}{r}\frac{\partial(u-\hat{u})}{\partial\theta}\Big)^2 +\frac{\tau_0 \tau^2(r)}{r}(u-\hat{u})^2\right]r drd\theta\nonumber\\
				&\geq\min\{\tau_0^{-1} b^{-1},\tau_0 a\}\| \nabla(u-\hat{u})\|_{L^2(\Omega'\setminus\Omega_1)}^2+\tau_0a\|u-\hat{u}\|_{L^2(\Omega'\setminus\Omega_1)}^2,\nonumber
			\end{split}
		\end{equation*}
		where we use the fact
		$$\frac{\partial r}{\partial\rho}=\frac{1}{\tau_0\tau(r)},\quad \tau(r)\geq r.$$
		Together with \eqref{erorPML1}, we arrive at
		\begin{equation}\label{H12estimate}
			\| u-\hat{u}\|_{H^1(\Omega\setminus\Omega_1)}\leq C\max\{\tau_0 b,\tau_0^{-1} a^{-1}\}e^{-\frac{1}{2}\tau_0(b-a)}\|U\|_{H^{1/2}(\Gamma_a)}.
		\end{equation}
		A combination of  \eqref{addest1} and  \eqref{H12estimate}  completes the proof.
	\end{proof}
	
	\begin{remark}\label{Rmk:22}
		{\em Some remarks are in order.
	\begin{itemize}
		\item[(i)] From the above proof, we see that  the dependence of the constant $C$ on $k$  is inherited from $\widehat C$ of  the standard regularity result \eqref{regur}.
  \smallskip
		\item[(ii)] The estimate \eqref{cirest1} provides us insights into the choices of the  RCL parameter $\tau_0$ and $b.$
		For fixed $k$  and given accuracy threshold $\epsilon>0,$ we can choose $\tau_0(b-a)=O(|\ln \epsilon|).$
	\end{itemize}}
\end{remark}

	\subsection{Performance of the circular RCL}
 We provide below some numerical results to demonstrate the accuracy and good performance of this new technique. As we are interested in the propagation of the outgoing scattering field in the outer layer,
 it appears sufficient to consider the  circular scatterer with $D:=\{0<r< R\}.$ In this case formulate the transformation \eqref{rrmap} as
	\begin{equation}
		\rho = \tau(r) = \int_0^r\alpha(s)ds = r\beta(r)=\begin{cases}
			r & \text { if } r\le a, \\
			ae^{\tau_0(r-a)} & \text { if } r> a,
		\end{cases}\quad \theta\in[0,2\pi), \label{rmap}
	\end{equation}
	where  
	\begin{align*}
		\alpha(r) = 
		\begin{cases}
			1,&  r\in[R,a], \\
			a\tau_0 e^{\tau_0(r-a)},&  r\in (a,+\infty),
		\end{cases}
		\quad\beta(r) =
		\begin{cases}
			1, & r\in[R,a], \\
			\frac{\tau(r)}{r},  &  r\in (a,+\infty).
		\end{cases}
	\end{align*}
Correspondingly,  we have
	\begin{align*}
	&\Omega_{1} = \{{\bs x}\in {\mathbb R}^2:|{\bs x}|\in I_1:= (R,a)\},\quad\Omega_{2} = \{{\bs x}\in {\mathbb R}^2:|{\bs x}|\in I_2:=(a,b)\},\\
	&\Gamma_R =  \{{\bs x}\in {\mathbb R}^2:|{\bs x}|= R\},\quad \Gamma_b= \{{\bs x}\in {\mathbb R}^d:|{\bs x}|= b\},\quad I=I_1\cup I_2.
	\end{align*}

We  employ the Fourier spectral method in the angular direction  and Lengedre spectral-element method in the radial direction  to solve the RCL equation \eqref{RCLeq1}--\eqref{BC2}. Firstly, we  reduce the two-dimensional problem \eqref{RCLeq1}--\eqref{BC2} to a sequence of one-dimensional problems by using the Fourier expansion in $\theta$ direction as in \cite{shen2011spectral}:
	\begin{subequations}\label{RPML00}
		\begin{align}
			&-\frac{1}{r\beta}\frac{\partial }{\partial r}\left(\frac{\beta r}{\alpha}\frac{\partial \hat{u}_n}{\partial r}\right)+\left(\frac{\alpha n^2}{r^2\beta^2}-\alpha k^2\right) \hat{u}_n = 0,\label{RPMLeq1}\\[3pt]
			& \hat{u}_n = \hat{g}_n\; \hbox{ on }\; \Gamma_R,\quad \hat{u}_n =0\; \hbox{ on }\; \Gamma_b, \ |n|=0,1,2,\cdots\label{RPMLbc2}
		\end{align}
	\end{subequations}
		together with usual transmission conditions at $r=a,$
where  $\{\hat u_n, \hat{g}_n\}$ are the Fourier coefficients of $\{\hat u, g\}.$ We reiterate that (i) the coefficients of the RCL-equation \eqref{RPML00} are all real valued;  (ii) its solution is expected to decay exponentially in the layer in view of  \eqref{ascir} and Theorem \ref{cirpcl}; and (iii)   the compression mapping increases oscillation near $r=a$ (see Figure \ref{figexact1drpml}) but with a pattern $e^{{\rm i}k(\tau(r)-a)}$ (see  \eqref{ascir}).  Thus
we introduce  the substitution: $\hat{u}_n(r) = w(r) \hat{v}_n(r)$ with
 \begin{equation}\label{substituionB}
	w(r):=
	\begin{cases}
			1, & r\in I_1, \\
			e^{{\rm i}k(\tau(r)-a)},  &  r\in I_2,
		\end{cases}
 \end{equation}
 and solve for $\hat{v}_n(r)$, which must  be  free of oscillation and well-behaved on $I_2$.  Correspondingly, a weak formulation of \eqref{RPML00} is to find  $\hat{u}_n = w\hat{v}_n$ with $\hat{v}_n\in H^1(I)$, $\hat{v}_n = \hat{g}_n$ on $\partial D$ and $\hat{v}_n=0$ on $\Gamma_b$ such that
	\begin{align}
		\breve{\mathcal{B}}(\hat{v}_n,\phi)=\mathcal{B}(w\hat{v}_n,w\phi)=0, \quad\forall \phi\in H_0^1(I),\label{rbilineardef}
	\end{align}
	where
	the bilinear form $\mathcal{B}(\cdot,\cdot): H^1(I)\times H^1(I)\rightarrow \mathcal{C}$ is defined as
	\begin{equation*}
		\mathcal{B}(\hat{u}_n,\psi) = \int_R^b\Big\{\frac{\beta r}{\alpha}\frac{\partial \hat{u}_n}{\partial r}\frac{\partial}{\partial r}\Big(\frac{1}{r\beta}{\psi^*}\Big)+\Big(\frac{\alpha n^2}{r^2\beta^2}-\alpha k^2\Big)\hat{u}_n{\psi^*} \Big\} dr\label{bilineardef}
	\end{equation*}
	with $\psi^*$ being the conjugate of $\psi.$
	Let  $P_N$ be the polynomial set of degree at most $N$. Define the finite dimensional approximation space
\begin{align*}
	V_N=\big\{\phi\in C(\bar I)\,:\, \phi|_{I_1},\phi|_{I_2}\in P_{N}\big\}.
	\end{align*}
	Then, the spectral element approximation for the RCL problem \eqref{RPMLeq1}--\eqref{RPMLbc2} is to  find $\hat{u}_N^n = w\hat{v}_N^n$ with $\hat{v}_N^n\in V_N,\hat{v}_N^n=\hat{g}_n \hbox{ on }\Gamma_R \hbox{ and } \hat{v}_N^n = 0\hbox{ on } \Gamma_b$ such that 
	\begin{align}
		\breve{\mathcal{B}}(\hat{v}_N^n,\phi)=\mathcal{B}(\omega\hat{v}_N^n,\omega\phi)=0,\quad \forall \phi \in V_N^0=V_N\cap H_0^1(I).
\label{computscheme1}
	\end{align}

In the following test, we consider the incident wave of the form
	$$u_{\rm inc}(\theta)=e^{{\rm i}k R\cos\theta}=\sum\limits_{n=-\infty}^{\infty}{\rm i}^{-n}J_n(kR)e^{{\rm i}n\theta},$$
	and the scattering problem has the exact solution
	\begin{align}
		U_{\rm ex}(\rho,\theta) = -\sum_{n=0}^{\infty}{\rm i}^{n}J_n(k R)\frac{H_n^{(1)}(k \rho)}{H_n^{(1)}(k a)}e^{{\rm i}n\theta}.\label{solution} 
	\end{align}
 Correspondingly, we have the exact mapped solution and reference solution with substitution as follows 	\begin{equation}
		\begin{split}
			{u}(r,\theta) = -\sum_{n=0}^{\infty}{\rm i}^{n}J_n(k R)\frac{H_n^{(1)}(k \tau(r))}{H_n^{(1)}(k a)}e^{{\rm i}n\theta};\quad
		    v(r,\theta) =
			\begin{cases}
				 u(r,\theta), & r\in[R,a), \\
              e^{{\rm i}k(\tau(r)-a)} u(r), & r\geq a.
			\end{cases}\nonumber
		\end{split}
	\end{equation}
	
In the following numerical tests, we choose $\tau_0 = \log(1/(\epsilon^2))/(b-a)$ and the cut-off Fourier mode $M$ is chosen such that $|J_M(k R)|\leq\epsilon_1$ where $\epsilon,\epsilon_1>0$ are given error thresholds. Moreover, let the numerical solution be   
$\hat{v}_N=\sum_{|n|=0}^M\hat{v}_N^n e^{{\rm i}n\theta},$ and denote
$v^R={\rm Re}\{v\}, v^I={\rm Im}\{v\}.$ 
In the first test, we choose $R = 0.5,a=1, b = 2,\epsilon = \epsilon_1 = 10^{-12}$ and using the same order $N$ in the sub-intervals: $I_1$ and $I_2$. We tabulate in Table \ref{table1} the point-wise maximum norm $e_v^{R}= v^{R}-\hat{v}_N^{R}, e_v^{I}= v^{I}-\hat{v}_N^{I}, e_u^{R}= u^{R}-\hat{u}_N^{R}, e_u^{I}= u^{I}-\hat{u}_N^{I}$ which is 
computed at $20000$ uniform points on each interval, from which we observe a typical spectral convergence even for large wave numbers.

\begin{table}[!th]\small
	\caption{The convergence rate of Fourier spectral method with $\theta  = 0, b-a = 1.$} \label{table1}
		\begin{tabular}{|c|c|c|c|c|c|}
			\hline
			$k$&$N$ &$\|e_u^{R}\|_{L^\infty(R,a)}$  &$\|e_u^{I}\|_{L^\infty(R,a)}$   &$\|e_v^{R}\|_{L^\infty(a,b)}$  &$\|e_v^{I}\|_{L^\infty(a,b)}$  \\
			\hline
			50
			&50 & 6.8236{\rm e}-5 &6.1701{\rm e}-5 &5.7900{\rm e}-5 & 4.5228{\rm e}-5 \\
			&80 & 1.4142{\rm e}-8 &1.3521{\rm e}-8 &4.1121{\rm e}-8 & 3.6170{\rm e}-8\\
			&100 &7.6288{\rm e}-11 &7.1791{\rm e}-11 &2.3315{\rm e}-10 & 2.2650{\rm e}-10\\
			\hline
			150
			&80 & 4.5498{\rm e}-3 &4.8001{\rm e}-3 &2.8559{\rm e}-3 & 3.0749{\rm e}-3 \\
			&100 & 3.9752{\rm e}-5 &3.9730{\rm e}-5 &1.3509{\rm e}-5 & 1.6136{\rm e}-5\\
			&120 &1.0533{\rm e}-7 &9.4557{\rm e}-8 &1.7454{\rm e}-7 & 1.7719{\rm e}-7\\
			&150 &8.9893{\rm e}-11 &9.0540{\rm e}-11 &6.2593{\rm e}-10& 6.2760{\rm e}-10\\
			\hline
			300
			&150 & 9.6473{\rm e}-4 &1.0137{\rm e}-3 &3.6435{\rm e}-4 & 3.5747{\rm e}-4\\
			&180 &1.3120{\rm e}-6 &1.1770{\rm e}-6 &1.8190{\rm e}-7 & 1.8122{\rm e}-7\\
			&200 &5.4953{\rm e}-9 &5.4157{\rm e}-9 &7.2197{\rm e}-8 & 9.8913{\rm e}-8\\
			\hline
		\end{tabular}
	\end{table}

We plot in Figure \ref{fufig01}: (a)-(b) the exact solution and numerical RCL solution $k=50$ and $N_1=N_2=N=100$ with $\theta=0$. 
Further, the field in RCL area is well behaved and decrease well to zero without any oscillation. In other words, due to the fact that $\hat{v}_N$ is a good approximation of $v$, $u(r)=e^{-{\rm i}k(\tau(r)-a)}v(r)(r\in I_2)$ and $U(\rho)=u(\tau(r))$, we know that $U_N=e^{-{\rm i}k(\rho-a)}\hat{v}_N(\tau^{-1}(r))$ should be a good approximation of $u$.  We plot the solution of both the original scattering problem and the numerical RCL solution on the interval $\rho\in[1,3]$ for $k=50$ and $N_1=N_2=N=100$ in Figure \ref{fufig01}: (c)-(d).  From this figure, we conclude that the RCL solution convergence well to the original solution in the domain $[a,\tau^{-1}(b)]$. In the other words, the far field pattern of the original scattering problem can be obtain by using coordinate transform directly, other than using the Green function method. In other words, we obtain the far field pattern of the scattering problem by using $U_N=e^{-{\rm i}k(\rho-a)}\hat{v}_N(\tau^{-1}(r))$.

		\begin{figure}[!tb]
		\centering
		\subfigure[Numerical and exact real part of $v$.]{
			\includegraphics[width=0.40\textwidth]{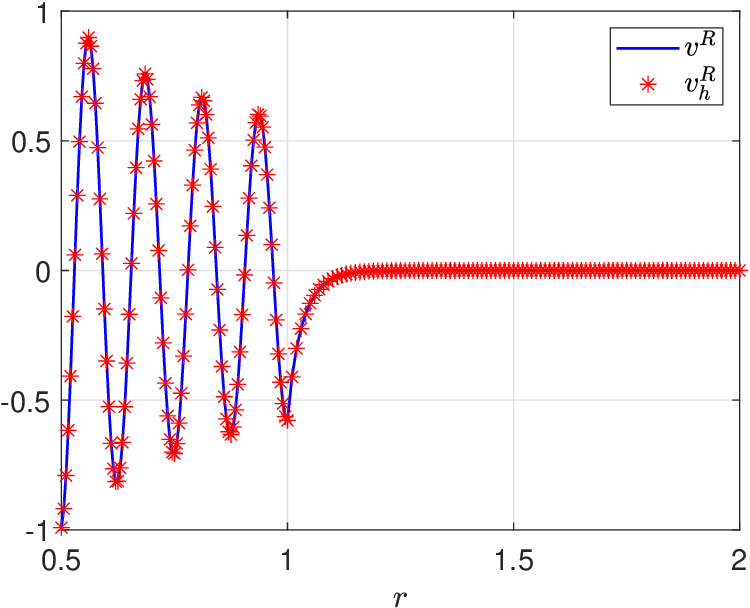}
		}\qquad
		\subfigure[Numerical and exact imaginary part of $v$.]{
			\includegraphics[width=0.40\textwidth]{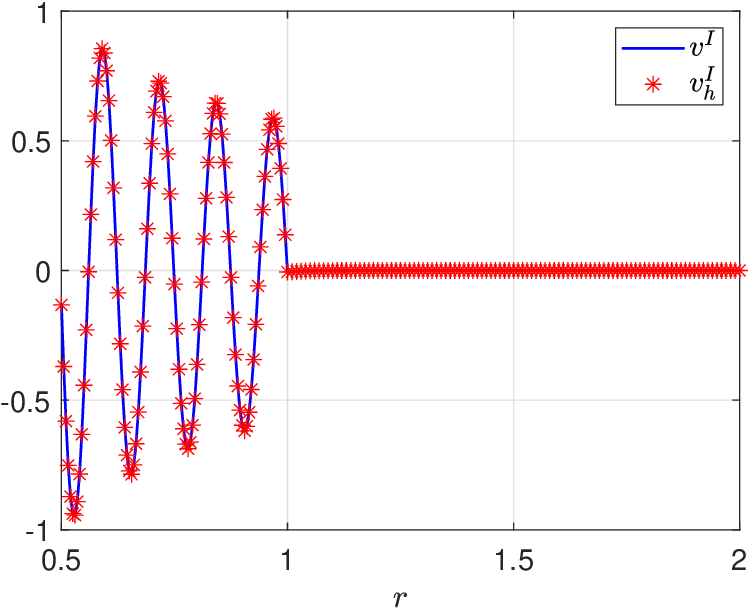}}
   \subfigure[Numerical and exact real part of $u$.]{
			\includegraphics[width=0.40\textwidth]{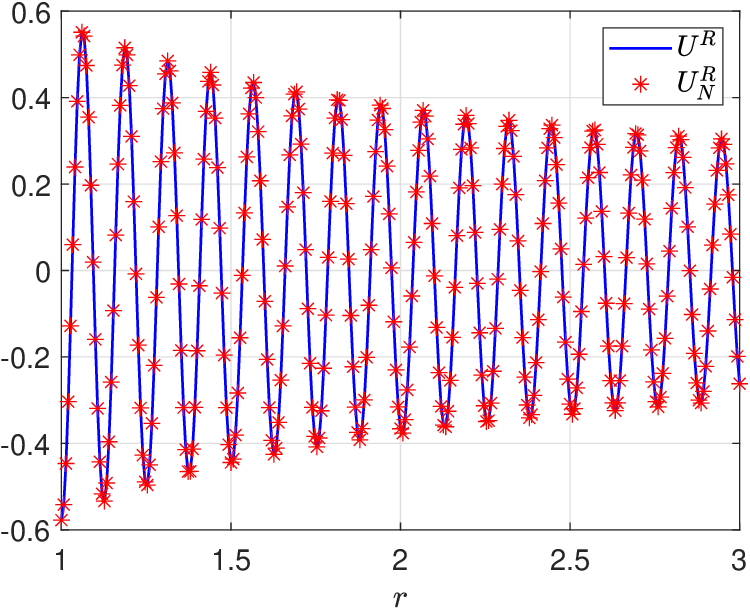}
		}\qquad
		\subfigure[Numerical and exact imaginary part of $u$.]{
			\includegraphics[width=0.40\textwidth]{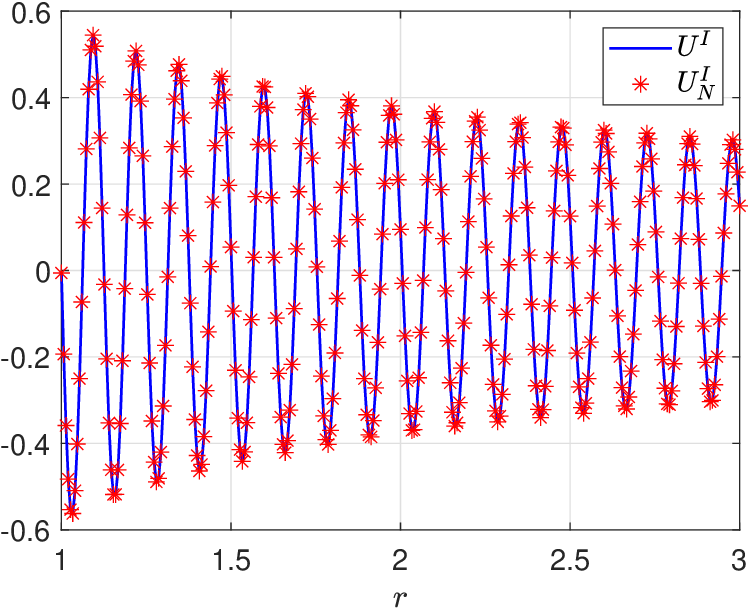}
		}
		\caption{The numerical solution in 2D for $\theta = 0, k  = 50$, under the real mapping and polynomial order is $N_1=N_2=150.$
  }\label{fufig01}
	\end{figure}

	When using the PML/PAL method, one would wish the ``artificial domain" $I_2$ is as thin as possible so that one can save the computational cost. Due to the fact that the RCL method has the effect of PML layer, we give here some numerical example on the impact of the thickness of the layer. To this end, we choose $R = 0.5,a=1,\epsilon = \epsilon_1 = 10^{-13}$ and using the same order of polynomial in $I_1$ and $I_2$. We list the maximum errors on each sub-interval in Table \ref{table01}. 
	 From the two tables, we know that the RCL solution converges  to the original solution in  $I_1$ and $I_2$. The results in Table \ref{table01} indicate that if  $e^{-\tau_0 d}<\epsilon$, then the thickness  $d$ does not affect the computational results essentially. 
	\begin{table}
		\small
		\caption{Errors vs thickness $d=b-a$ for RCL with $(N_1, N_2) = (200,200)$.}
		\label{table01}
		\begin{tabular}{|c|cc|cc|cc|}
			\hline\multirow{2}{*}{$k$} & \multicolumn{2}{|c|}{$d=1$} & \multicolumn{2}{c|}{$d=0.1$} & \multicolumn{2}{c|}{$d=0.001$} \\
			\cline { 2 - 7 } & $\|e_u^{R}\|_{L^\infty(R,a)}$ & $\|e_v^{R}\|_{L^\infty(a,b)}$ & $\|e_u^{R}\|_{L^\infty(R,a)}$ & $\|e_v^{R}\|_{L^\infty(a,b)}$ & $\|e_u^{R}\|_{L^\infty(R,a)}$ & $\|e_v^{R}\|_{L^\infty(a,b)}$ \\
			\hline
   50 & 4.70{\rm e}-13 & 5.00{\rm e}-13 &  5.09{\rm e}-13 & 5.00{\rm e}-13  &    4.98{\rm e}-13 & 5.04{\rm e}-13  \\
   100 & 9.59{\rm e}-12 & 8.16{\rm e}-12 & 9.59{\rm e}-12 & 8.16{\rm e}-12  & 9.94{\rm e}-12 & 6.74{\rm e}-12 \\
   150 & 3.57{\rm e}-12 & 4.91{\rm e}-12  &  3.56{\rm e}-12 & 4.91{\rm e}-12   &  3.50{\rm e}-12 & 4.87{\rm e}-12  \\
			200 & 4.01{\rm e}-12& 2.09{\rm e}-12 & 4.01{\rm e}-12 & 2.09{\rm e}-12 & 4.00{\rm e}-12 & 3.22{\rm e}-12 \\
   \hline\multirow{2}{*}{$k$} & \multicolumn{2}{|c|}{$d=1$} & \multicolumn{2}{c|}{$d=0.1$} & \multicolumn{2}{c|}{$d=0.001$} \\
			\cline { 2 - 7 } & $\|e_u^{I}\|_{L^\infty(R,a)}$ & $\|e_v^{I}\|_{L^\infty(a,b)}$ & $\|e_u^{I}\|_{L^\infty(R,a)}$ & $\|e_v^{I}\|_{L^\infty(a,b)}$ & $\|e_u^{I}\|_{L^\infty(R,a)}$ & $\|e_v^{I}\|_{L^\infty(a,b)}$ \\
			\hline
   50&8.42{\rm e}-13 & 1.23{\rm e}-13  & 8.34{\rm e}-13 & 1.31{\rm e}-13 & 8.37{\rm e}-13 & 1.33{\rm e}-13 \\
   100 & 2.96{\rm e}-12 & 7.43{\rm e}-12 & 2.96{\rm e}-12& 7.43{\rm e}-12  & 2.99{\rm e}-12 & 7.44{\rm e}-12 \\
   150&1.90{\rm e}-12 & 2.12{\rm e}-12 &  1.92{\rm e}-12 & 2.26{\rm e}-12   &  1.88{\rm e}-12 & 3.02{\rm e}-12  \\
			200 & 1.58{\rm e}-12 & 2.68{\rm e}-12 & 1.58{\rm e}-12 & 2.68{\rm e}-12 & 1.59{\rm e}-12 & 1.73{\rm e}-12 \\
   \hline
		\end{tabular}
	\end{table}

	\section{Rectangular real compressed layer}\label{Sect3:RRCL}
	In this section, we  construct the rectangular RCL, which is more practical for  domain reduction of Helmholtz scattering problems with more general scatterers.
		\begin{figure}[!tb]
		\centering
		\subfigure[PML domain]{
			\includegraphics[width=0.25\textwidth]{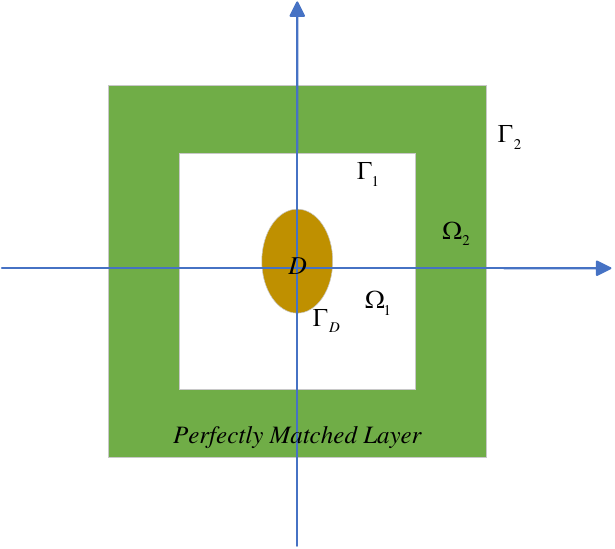}
		} \quad
		\subfigure[RCL domain]{		\includegraphics[width=0.25\textwidth]{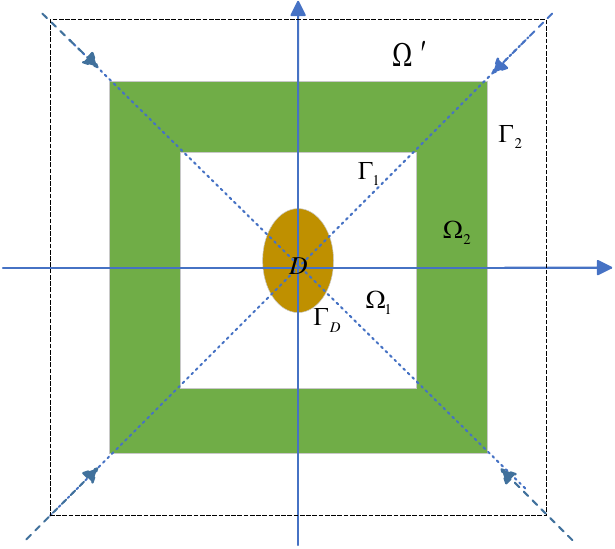}
  }
   \subfigure[Illustrate of RCL domain]{
	\includegraphics[width=0.25\textwidth]{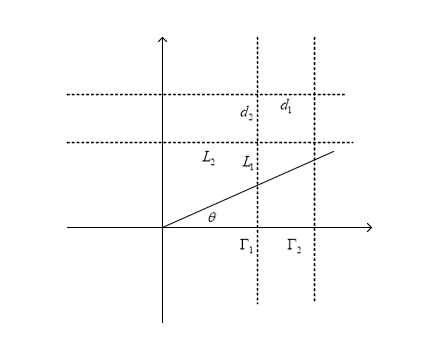}
		}
		\caption{Schematic illustrations of the rectangular PML and RCL with four trapezoidal patches, and the compression transformation from radial direction..}\label{mappingarea} 
	\end{figure}

	\subsection{The rectangular RCL equation} Let $R_1 = \{\hat{\bs x}\in \mathbb{R}^2:|\hat x_1|<L_1,|\hat x_2|<L_2\}$ be the  rectangular domain that
	 contains the scatterer $D.$  Let $\Gamma_1 = \partial R_1$ and  $\Omega_1 = R_1\setminus\bar{D}$ 
	  (see Figure \ref{mappingarea}). Note that $\Gamma_1$ has the parametric form:
	$	\rho=a(\theta),\, \theta \in[0,2\pi)$ with
	\begin{align}
		a(\theta) =
		\begin{cases}
			L_1\sec\theta, & \theta\in [0,\theta_0)\cup(2\pi-\theta_0,2\pi), \\
			L_2\csc\theta, & \theta\in [\theta_0,\pi-\theta_0), \\
			-L_1\sec\theta, & \theta\in [\pi-\theta_0,\pi+\theta_0), \\
			-L_2\csc\theta, & \theta\in [\pi+\theta_0,2\pi-\theta_0),
		\end{cases}\quad \theta_0 = \arctan\frac{L_2}{L_1},\label{mappingt}
	\end{align}
	where $(L_1,L_2),(-L_1,L_2),(L_1,-L_2),(-L_1,-L_2)$ are the four vertices of $R_1.$ 

	As with the circular case, the first step of designing the RCL technique  is to introduce an exponential mapping.
	 Similar to \eqref{rrmap}, we define the following coordinate transformation between $\rho = |\hat{\bs x}|$ and $r = |{{\bs x}}|:$
	\begin{equation}
		\rho = \tau(r,\theta) =
		\begin{cases}
			r, & r\leq a(\theta), \\
			a(\theta)e^{\tau_0(r-a(\theta))}, & r>a(\theta),
		\end{cases}\quad \theta = \theta,\label{rmappr}
	\end{equation}
but  $a$ is a function of $\theta$ in this context.
	Let $ u({{\bs x}}) = U(\hat{\bs x})$ be the solution of the scattering problem \eqref{helmholtz2eq2}--\eqref{helmholtz2bc2}, which satisfies $u=U$ on $\Gamma_1.$ Then using the  change of variables (see  \cite[Lemma 4.1]{yang2019truly}), we can transform the Helmholtz equation \eqref{helmholtz2eq2} into
	\begin{equation}
		-{\mathbb J}^{-1}\nabla\cdot\left(\bs{C}\nabla  u\right)-k^2  u = 0\quad\hbox{in}\;\;\; \mathbb{R}^2\setminus\bar{R}_1,\label{recrcl}
	\end{equation}
	where
	\begin{equation}\label{invjacobian1}
	{\bs C} = {\bs J}^{-1}{\bs J}^{-\top}\mathbb J,\quad	{\bs J}^{-1}=\frac{1}{\partial_r\tau}{\bs R}_{\theta}\begin{pmatrix}
			1 & -\partial_\theta\tau/\tau\\
			0 & r\partial_r\tau/\tau
		\end{pmatrix}{\bs  R}_{\theta}^{\top},\quad \mathbb J^{-1}=|\bs J^{-1}|=\frac{r}{\tau\partial_r\tau},
	\end{equation}
with ${\bs R}_{\theta}$ being the rotation matrix. Compared with the circular case in \eqref{RPMLeq}--\eqref{invjacobianr}, the matrix ${\bs J}$ in \eqref{invjacobian1} is  more complicated.

 Using \eqref{ascir}, we find that for any fixed $\theta$ and $\rho>a(\theta)$,
\begin{equation*}
		 u(r,\theta) := U(\rho,\theta)=\sqrt{\frac{2}{k\pi a}}\, e^{-\frac{\tau_0}{2}(r-a)}\,{\rm exp}\big({\rm i}k a\, e^{\tau_0(r-a)}\big)\, M(k\tau(r),\theta).
\end{equation*}
It means that {\em the solution  $u(r,\theta)$ of the transformed Helmholtz problem \eqref{recrcl} must  decay exponentially}. This motivates us to truncate \eqref{recrcl}  directly by a rectangular boundary and impose the  homogeneous Dirichlet boundary condition, which leads to the {\em  Helmholtz equation reduced by  the RCL technique} and  the corresponding  {\bf RCL-equation} below.
%
%
For this purpose, we first introduce some notation. As illustrated in Figure \ref{mappingarea} (left), we surround $\Omega_1$ by a rectangular layer:
 $$R_1\subset R_2: = \{{\bs x}\in \mathbb{R}^2:|{x}_1|\leq L_1+d_1,|{{x}}_2| \leq L_2+d_2\},$$
 and denote
 $$\Gamma_2 = \partial {R}_2, \;\;\; \Omega_2 = {B}_2\setminus \bar{R}_1, \;\;\;  \Omega = {R}_2\setminus D=\Omega_1\cup \Omega_2.$$
 Like $\Gamma_1,$ the outer rectangle $\Gamma_2$ has the parametric representation $r=b(\theta)$ with $L_i+d_i $ in place of $L_i, i=1,2$  in \eqref{mappingt}.
	Then, we obtain the {\bf RCL-equation:} 
	%
		\begin{subequations}\label{eqn:35}
			\begin{align}
				&-\mathbb J^{-1}\nabla\cdot\left(\bs{C}\nabla \hat{u}(\bs x)\right)-k^2 \hat{u}(\bs x) = 0 \hbox{ in }\Omega,\label{modelach1}\\
				& \hat{u} = g \hbox{ on }\Gamma_D,\quad \hat{u} = 0 \hbox{ on }\Gamma_2,\label{modelach2}
			\end{align}
		\end{subequations}
		where $\bs{C}=\bs{I},\mathbb J^{-1}=1$ in $\Omega_1$ and $\bs{C} = \bs J^{-1}\bs J^{-\top}\mathbb J$ as defined in \eqref{invjacobian1} in $\Omega_2.$
\begin{remark} {\em Note that we follow the same principle for constructing the RCL for both . Since the transformations used in the rectangular RCL equations are angle-dependent, this makes the equations for the rectangular RCL slightly more complex than the circular RCL equations.}	
\end{remark}


	\subsection{Convergence analysis}
	We start with introducing the Dirichlet-to-Neumann (DtN) operator $T: H^{1/2}(\Gamma_1)\rightarrow H^{-1/2}(\Gamma_1):$
	\begin{equation}
		T \xi = \frac{\partial \xi}{\partial {\bs n}_1}\quad \hbox{on }\;\; \Gamma_1,\nonumber
	\end{equation}
	where ${\bs n}_1$ is the unit outer normal vector  along $\Gamma_1,$ and
	 $\xi$ is the solution of the exterior  problem:
	\begin{subequations}
		\begin{align}
			&-\Delta\xi-k^2\xi = 0\quad\hbox{ in }\;\; \Omega_{ex}:=\mathbb{R}^2\setminus \bar{R}_1,\label{exhel1}\\
			&\xi = \zeta\quad\hbox{ on }\;\; \Gamma_1,\label{exhel2}\\
			&\frac{\partial  \xi} {\partial \rho} -\ri k \xi= o(\rho^{-1/2}) \quad {\rm as}\;\; \rho = |{\bs x}| \to \infty.\label{exhel3}
		\end{align}
	\end{subequations}
	It is known that \eqref{exhel1}--\eqref{exhel3} has a unique solution $\xi\in H_{\rm loc}^1(\Omega_{ex})$ (see e.g., \cite{colton2013integral}) and  the DtN map   $T: H^{1/2}(\Gamma_1)\rightarrow H^{-1/2}(\Gamma_1)$ is  a well-defined,  continuous linear operator.
 We further  introduce the following single and double layer potentials:
	\begin{subequations}
		\begin{align}
			&{\Psi}_{\rm SL}^k(\lambda)(\hat{\bs x})=\int_{\Gamma_1}{G}_k(\hat{\bs x},\hat{\bs y})\lambda(\hat{\bs y})\, ds(\hat{\bs y}),\quad\forall \lambda\in H^{-1/2}(\Gamma_1),\label{modifgreen1}\\
			&{\Psi}_{\rm DL}^k(\zeta)(\hat{\bs x}) = \int_{\Gamma_1}\frac{\partial {G}_k(\hat{\bs x},\hat{\bs y})}{\partial {\bs n}_1(\hat{\bs y})}\zeta(\hat{\bs y})\, ds(\hat{\bs y}),\quad\forall \zeta\in H^{1/2}(\Gamma_1),\label{modifgreen2}
		\end{align}
	\end{subequations}
	where
\begin{equation}
	{G}_k(\hat{\bs x},\hat{\bs y}) = \frac{{\rm i}}{4}H_0^{(1)}(k|\hat{\bs x}-\hat{\bs y}|)\label{modifiedgreen}
\end{equation}
is the fundamental solution of the  Helmholtz equation with the Sommerefeld radiation boundary condition and $\lambda = \frac{\partial \zeta}{\partial {\bs n}_1(\bs y)} \in H^{-\frac{1}{2}}(\Gamma_1)$ is the Neumann trace of $\zeta$ on $\Gamma_1$. It is easy to see that ${G}_k$ is smooth for $\hat{\bs x}\in \Omega_{ex},$ and we have
	\begin{equation}
		U(\hat{\bs x})= -{\Psi}_{\rm SL}^k(\lambda)(\hat{\bs x})+{\Psi}_{\rm DL}^k(\zeta)(\hat{\bs x})\quad \hbox{in} \;\; \Omega_{ex}.\label{orgsolutionhel}
	\end{equation}
The following estimates of the Green's function are useful in the forthcoming analysis. 	
	\begin{lemma}\label{lemmagreen}
   For any $\hat{\bs x}\in \Omega_{ex}, \hat{\bs y}\in \Gamma_1$ with $k|\hat{\bs x}-\hat{\bs y}|\ge 1$, we have
		\begin{subequations}
			\begin{align}
				&\left|{G}_k(\hat{\bs x},\hat{\bs y})\right|\leq \frac{1}{4} k^{-1/2}|\hat{\bs x}-\hat{\bs y}|^{-1/2}|H_1^{(1)}(1)|;\label{Greenesti1}\\
				&\left|\frac{\partial {G}_k(\hat{\bs x},\hat{\bs y})}{\partial \hat{x}_j}\right|\leq  Ck |\hat{\bs x}-\hat{\bs y}|^{-1/2}|H_1^{(1)}(1)|;\label{Greenesti2}\\
				&\left|\frac{\partial {G}_k(\hat{\bs x},\hat{\bs y})}{\partial \hat{y}_j}\right|\leq  Ck |\hat{\bs x}-\hat{\bs y}|^{-1/2}|H_1^{(1)}(1)|;\label{Greenesti3}\\
				&\left|\frac{\partial^2 {G}_k(\hat{\bs x},\hat{\bs y})}{\partial \hat{x}_i\partial \hat{y}_j}\right|\leq  C\big(k^{1/2}|\hat{\bs x}-\hat{\bs y}|^{-3/2}+k^{3/2}|\hat{\bs x}-\hat{\bs y}|^{-1/2}\big)|H_1^{(1)}(1)|,\label{Greenesti4}
			\end{align}
		\end{subequations}
		for $i,j=1,2,$ where $C$ is a positive constant independent of $\hat{\bs x},\hat{\bs y}$ and $k.$
	\end{lemma}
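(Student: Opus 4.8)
The plan is to start from the defining formula $G_k(\hat{\bs x},\hat{\bs y})=\tfrac{\rm i}{4}H_0^{(1)}(k|\hat{\bs x}-\hat{\bs y}|)$ and differentiate explicitly, reducing everything to bounds on $H_0^{(1)}$ and $H_1^{(1)}$ (and $H_2^{(1)}$) evaluated at the argument $z=k|\hat{\bs x}-\hat{\bs y}|\ge 1$. Write $r=|\hat{\bs x}-\hat{\bs y}|$ and $z=kr$. The key classical facts I would invoke are: (i) $z^{1/2}|H_\nu^{(1)}(z)|$ is nonincreasing on $(0,\infty)$ for $\nu\ge 1$ (the same monotonicity of $x|H_n^{(1)}(x)|^2$ quoted from Watson in Lemma \ref{Hankel}), so that $|H_1^{(1)}(z)|\le z^{-1/2}|H_1^{(1)}(1)|$ for $z\ge 1$; (ii) the bound $|H_0^{(1)}(z)|\le |H_1^{(1)}(z)|$ for $z>0$ (used already, as $|H_{n-1}^{(1)}|\le|H_n^{(1)}|$); and (iii) the derivative identity $\frac{d}{dz}H_0^{(1)}(z)=-H_1^{(1)}(z)$ together with $\frac{d}{dz}H_1^{(1)}(z)=H_0^{(1)}(z)-z^{-1}H_1^{(1)}(z)$, and more generally $(H_\nu^{(1)})'=H_{\nu-1}^{(1)}-\nu z^{-1}H_\nu^{(1)}$.

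For \eqref{Greenesti1}, combine $|G_k|=\tfrac14|H_0^{(1)}(z)|\le\tfrac14|H_1^{(1)}(z)|\le\tfrac14 z^{-1/2}|H_1^{(1)}(1)|=\tfrac14 k^{-1/2}r^{-1/2}|H_1^{(1)}(1)|$. For \eqref{Greenesti2}, chain-rule: $\partial_{\hat x_j}G_k=\tfrac{\rm i}{4}(H_0^{(1)})'(z)\,k\,\partial_{\hat x_j}r=-\tfrac{\rm i k}{4}H_1^{(1)}(z)\frac{\hat x_j-\hat y_j}{r}$, and since $|\hat x_j-\hat y_j|\le r$ we get $|\partial_{\hat x_j}G_k|\le\tfrac{k}{4}|H_1^{(1)}(z)|\le\tfrac{k}{4}z^{-1/2}|H_1^{(1)}(1)| = Ck\,z^{-1/2}|H_1^{(1)}(1)|$; absorbing $z^{-1/2}=k^{-1/2}r^{-1/2}\le r^{-1/2}$ when $k\ge 1$ (or keeping the $k$-power inside the constant $C$ as the statement permits) yields the claimed form $Ck\,|\hat{\bs x}-\hat{\bs y}|^{-1/2}|H_1^{(1)}(1)|$. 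Estimate \eqref{Greenesti3} is identical by the symmetry $\partial_{\hat y_j}r=-\partial_{\hat x_j}r$. For the mixed second derivative \eqref{Greenesti4}, differentiate $-\tfrac{\rm i k}{4}H_1^{(1)}(z)\frac{\hat x_j-\hat y_j}{r}$ in $\hat y_i$: one term hits $H_1^{(1)}(z)$ producing $k(H_1^{(1)})'(z)=k\big(H_0^{(1)}(z)-z^{-1}H_1^{(1)}(z)\big)$ times bounded geometric factors, and the other hits $\partial_{\hat y_i}\big(\frac{\hat x_j-\hat y_j}{r}\big)$, which is $O(r^{-1})$. Collecting, $|\partial_{\hat x_i}\partial_{\hat y_j}G_k|\le C\big(k^2|H_1^{(1)}(z)|+k r^{-1}|H_1^{(1)}(z)|\big)\le C\big(k^2 z^{-1/2}+k r^{-1}z^{-1/2}\big)|H_1^{(1)}(1)|$, and rewriting $z^{-1/2}=k^{-1/2}r^{-1/2}$ gives $C\big(k^{3/2}r^{-1/2}+k^{1/2}r^{-3/2}\big)|H_1^{(1)}(1)|$, exactly \eqref{Greenesti4}. (One should also bound $|H_0^{(1)}(z)|$ appearing from $(H_1^{(1)})'$ by $|H_1^{(1)}(z)|\le z^{-1/2}|H_1^{(1)}(1)|$, so no $H_0^{(1)}(1)$ enters.)

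The only mild subtlety — and the part I would be most careful about — is making the geometric factors rigorous: the chain rule on $\Gamma_1$, where $\hat{\bs y}$ ranges over a Lipschitz (piecewise smooth) boundary, still only needs the Euclidean gradients $\partial_{\hat x_j}r=(\hat x_j-\hat y_j)/r$ and $\partial_{\hat y_j}r=-(\hat x_j-\hat y_j)/r$, together with $|\hat x_j-\hat y_j|/r\le 1$ and $|\partial_{\hat y_i}((\hat x_j-\hat y_j)/r)|\le 2/r$, so no regularity of $\Gamma_1$ is actually used in the pointwise bounds. The other bookkeeping point is the treatment of the factor $k$: because the statement allows $C$ to depend on nothing (it is $k$-independent) but keeps explicit powers of $k$, one must be consistent in never hiding a power of $k$ inside $C$ — this is automatic if one systematically substitutes $z=kr$ and uses $|H_1^{(1)}(z)|\le z^{-1/2}|H_1^{(1)}(1)|$ at the end rather than earlier. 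With these conventions fixed, the four estimates follow by direct differentiation and the two Hankel inequalities, with essentially no further work.
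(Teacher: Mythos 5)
Your proof is correct and takes essentially the same route as the paper's: explicit differentiation of $\tfrac{\rm i}{4}H_0^{(1)}(k|\hat{\bs x}-\hat{\bs y}|)$ via $\frac{d}{dz}H_0^{(1)}(z)=-H_1^{(1)}(z)$ and the recurrence for $(H_1^{(1)})'$, combined with $|H_0^{(1)}(z)|\le|H_1^{(1)}(z)|$ and the Watson monotonicity of $z|H_1^{(1)}(z)|^2$ to get $|H_1^{(1)}(z)|\le z^{-1/2}|H_1^{(1)}(1)|$ for $z=k|\hat{\bs x}-\hat{\bs y}|\ge1$. The one bookkeeping wrinkle you flag (your sharper $k^{1/2}$-type bounds only reduce to the stated $Ck$ form with $k$-independent $C$ when $k$ is bounded below) is present in the paper's own proof as well, so nothing is missing relative to it.
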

	\begin{proof}
		 By  \eqref{hankelk021},
		\begin{equation}
			|G_k(\hat{\bs x},\hat{\bs y})|^2 = \frac{1}{16}\big|H_0^{(1)}(k|\hat{\bs x}-\hat{\bs y}|)\big|^2\leq \frac{1}{16}|H_1^{(1)}(k|\hat{\bs x}-\hat{\bs y}|)|^2.\nonumber
		\end{equation}
Using the fact that $z|H_n^{(1)}(z)|^2$ is a strictly decreasing function of $z$ (cf.   \cite[p.\!\! 446]{watson1995treatise}), we have
		\begin{equation}
			|G_k(\hat{\bs x},\hat{\bs y})|^2 \leq \frac{1}{16}\frac{k|\hat{\bs x}-\hat{\bs y}|\big|H_1^{(1)}(k|\hat{\bs x}-\hat{\bs y}|)\big|^2}{|H_1^{(1)}(1)|^2}\frac{|H_1^{(1)}(1)|^2}{k|\hat{\bs x}-\hat{\bs y}|}\leq \frac{1}{16k|\hat{\bs x}-\hat{\bs y}|}|H_1^{(1)}(1)|^2,\nonumber
		\end{equation}
		which yields \eqref{Greenesti1}. Using the formula
		\begin{equation}
			\frac{d H_0^{(1)}(z)}{dz} = - H_1^{(1)}(z),\nonumber
		\end{equation}
		we derive
		\begin{equation}
			\frac{\partial {G_k}(\hat{\bs x},\hat{\bs y})}{\partial \hat{x}_j} = -\frac{\rm i}{4}kH_1^{(1)}(k|\hat{\bs x}-\hat{\bs y}|)\frac{\hat{x}_j-\hat{y}_j}{|\hat{\bs x}-\hat{\bs y}|}.\label{derG}
		\end{equation}
		Hence,
		\begin{eqnarray}
			\Big|\frac{\partial {G_k}(\hat{\bs x},\hat{\bs y})}{\partial \hat{x}_j}\Big|^2\leq \frac{k^2}{16}\big|H_1^{(1)}(k|\hat{\bs x}-\hat{\bs y}|)\big|^2\leq\frac{k^2}{16|\hat{\bs x}-\hat{\bs y}|}|H_1^{(1)}(1)|^2.\nonumber
		\end{eqnarray}
		Then we obtain \eqref{Greenesti2}. Similarly, we can prove \eqref{Greenesti3}.
		
		To prove \eqref{Greenesti4}, we recall the formula
		\begin{equation}
			z\frac{dH_n^{(1)}(z)}{dz}+nH_n^{(1)}(z)=zH_{n-1}^{(1)}(z).\nonumber
		\end{equation}
		Hence, by \eqref{derG},
		\begin{equation}
			\begin{split}
				\frac{\partial^2 {G_k}(\hat{\bs x},\hat{\bs y})}{\partial \hat{x}_i\partial \hat{y}_j} &= -\frac{{\rm i}}{4}kH_1^{(1)}(k|\hat{\bs x}-\hat{\bs y}|)\Big(\frac{|\hat{\bs x}-\hat{\bs y}|^2\delta_{ij}-(\hat{x}_i-\hat{y}_i)(\hat{x}_j-\hat{y}_j)}{|\hat{\bs x}-\hat{\bs y}|^3}\Big)\nonumber\\
				&-\frac{{\rm i}}{4}k^2\frac{(\hat{x}_j-\hat{y}_j)(\hat{x}_i-\hat{y}_i)}{|\hat{\bs x}-\hat{\bs y}|^2}\Big(H_0^{(1)}(k|\hat{\bs x}-\hat{\bs y}|)-\frac{1}{k|\hat{\bs x}-\hat{\bs y}|}H_1^{(1)}(k|\hat{\bs x}-\hat{\bs y}|)\Big).\nonumber
			\end{split}
		\end{equation}
		Therefore, we have the following estimate
		\begin{equation}
			\begin{split}
				\Big|\frac{\partial^2 {G_k}(\hat{\bs x},\hat{\bs y})}{\partial \hat{x}_i\partial \hat{y}_j}\Big|
				&\leq Ck\frac{1}{|\hat{\bs x}-\hat{\bs y}|}H_1^{(1)}(k|\hat{\bs x}-\hat{\bs y})|+k^2|H_0^{(1)}(k|\hat{\bs x}-\hat{\bs y}|)|\nonumber\\
				&\leq Ck^{1/2}|\hat{\bs x}-\hat{\bs y}|^{-3/2}|H_1^{(1)}(1)|+Ck^{3/2}|\hat{\bs x}-\hat{\bs y}|^{-1/2}|H_1^{(1)}(1)|.\nonumber
			\end{split}
		\end{equation}
		This completes the proof.
	\end{proof}

	Now, we are in position to estimate the single and double layer potentials $\Psi_{\rm SL}^k,\Psi_{\rm DL}^k$. In what follows, we shall use the following weighted $H^{1/2}(\Gamma)$ norm (see \cite{chen2010convergence}),
	\begin{equation}
		\|w\|_{H^{1/2}(\Gamma)} = \Big(|\Gamma|^{-1}\|w\|^2_{L^2(\Gamma)}+|\Gamma|^{-1}|w|^2_{H^{1/2}(\Gamma)}\Big)^{1/2},\label{12norm}
	\end{equation}
	where
	\begin{equation}		|w|_{H^{1/2}(\Gamma)}^2=\int_\Gamma\int_\Gamma\frac{|w({\bs x})-w({\bs x}')|^2}{|{\bs x}-{\bs x}'|^2}ds({\bs x})ds({\bs x}').\label{defh12}	
	\end{equation}
	\begin{lemma}\label{lemmasingle}
  Let  $\Gamma$ be a piece-wise smooth, simply connected, closed curve in $\Omega_{\rm ex}$ satisfying  $k\, {\rm dist}(\Gamma; \Gamma_1)>1$  and $k\ge k_0>0$. Then for any $\lambda\in H^{-1/2}(\Gamma_1)$, we have 
		\begin{equation}
	\|\Psi_{\rm SL}^k(\lambda)(\hat{\bs x})\|_{H^{1/2}(\Gamma)}\leq Ck^{\frac{3}{2}} \big\{{\rm dist}(\Gamma; \Gamma_1)\big\}^{-\frac 12}\|\lambda\|_{H^{-1/2}(\Gamma_1)},\label{estisinglelayer}
		\end{equation}
  where $C$ is a positive constant independent of $k$ and $\lambda,$ and $\Psi_{\rm SL}^k(\lambda)$ is the single layer potential defined in \eqref{modifgreen1}.
	\end{lemma}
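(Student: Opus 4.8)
The plan is to control the two pieces of the weighted norm \eqref{12norm}, namely $\|\Psi_{\rm SL}^k(\lambda)\|_{L^2(\Gamma)}$ and the seminorm $|\Psi_{\rm SL}^k(\lambda)|_{H^{1/2}(\Gamma)}$ in \eqref{defh12}, by reducing both to weighted $H^{1/2}(\Gamma_1)$-bounds on the kernel via duality. Write $\delta:={\rm dist}(\Gamma;\Gamma_1)$, so that $k\delta>1$ by hypothesis. Since $\Gamma\subset\Omega_{\rm ex}$ is separated from $\Gamma_1$, the map $\hat{\bs x}\mapsto G_k(\hat{\bs x},\cdot)$ is a smooth $H^{1/2}(\Gamma_1)$-valued function on a neighbourhood of $\Gamma$, hence $\Psi_{\rm SL}^k(\lambda)\in C^\infty$ near $\Gamma$ and, by the $H^{-1/2}(\Gamma_1)$--$H^{1/2}(\Gamma_1)$ duality,
\[
|\Psi_{\rm SL}^k(\lambda)(\hat{\bs x})|\le \|\lambda\|_{H^{-1/2}(\Gamma_1)}\,\|G_k(\hat{\bs x},\cdot)\|_{H^{1/2}(\Gamma_1)},\qquad |\nabla_{\hat{\bs x}}\Psi_{\rm SL}^k(\lambda)(\hat{\bs x})|\le \|\lambda\|_{H^{-1/2}(\Gamma_1)}\,\|\nabla_{\hat{\bs x}}G_k(\hat{\bs x},\cdot)\|_{H^{1/2}(\Gamma_1)}
\]
for $\hat{\bs x}\in\Gamma$, where all $H^{1/2}$-norms are the weighted ones in \eqref{12norm}.

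Next I would bound the kernel norms on the right using Lemma \ref{lemmagreen}. For $\hat{\bs x}\in\Gamma$, $\hat{\bs y}\in\Gamma_1$ one has $|\hat{\bs x}-\hat{\bs y}|\ge\delta$, so \eqref{Greenesti1}--\eqref{Greenesti2} give $|G_k(\hat{\bs x},\hat{\bs y})|\le Ck^{-1/2}\delta^{-1/2}$ and $|\nabla_{\hat{\bs x}}G_k(\hat{\bs x},\hat{\bs y})|\le Ck\delta^{-1/2}$, controlling the $L^2(\Gamma_1)$-parts. For the $H^{1/2}(\Gamma_1)$-seminorm parts, observe that for $\hat{\bs y},\hat{\bs y}'\in\Gamma_1$ the straight chord $[\hat{\bs y},\hat{\bs y}']$ lies in the convex set $\bar R_1$, hence remains at distance $\ge\delta$ from $\hat{\bs x}$; applying the mean value theorem along this chord with \eqref{Greenesti3}--\eqref{Greenesti4} yields
\[
|G_k(\hat{\bs x},\hat{\bs y})-G_k(\hat{\bs x},\hat{\bs y}')|\le Ck\delta^{-1/2}|\hat{\bs y}-\hat{\bs y}'|,\qquad |\nabla_{\hat{\bs x}}G_k(\hat{\bs x},\hat{\bs y})-\nabla_{\hat{\bs x}}G_k(\hat{\bs x},\hat{\bs y}')|\le C\big(k^{1/2}\delta^{-3/2}+k^{3/2}\delta^{-1/2}\big)|\hat{\bs y}-\hat{\bs y}'|\le Ck^{3/2}\delta^{-1/2}|\hat{\bs y}-\hat{\bs y}'|,
\]
the last inequality using $\delta^{-1}<k$. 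Substituting into \eqref{defh12}--\eqref{12norm} and absorbing the fixed lengths $|\Gamma_1|$, $|\Gamma|$ and $k_0$ into $C$, I obtain $\|G_k(\hat{\bs x},\cdot)\|_{H^{1/2}(\Gamma_1)}\le Ck\delta^{-1/2}$ and $\|\nabla_{\hat{\bs x}}G_k(\hat{\bs x},\cdot)\|_{H^{1/2}(\Gamma_1)}\le Ck^{3/2}\delta^{-1/2}$, hence, for all $\hat{\bs x}\in\Gamma$,
\[
|\Psi_{\rm SL}^k(\lambda)(\hat{\bs x})|\le Ck\delta^{-1/2}\|\lambda\|_{H^{-1/2}(\Gamma_1)},\qquad |\nabla_{\hat{\bs x}}\Psi_{\rm SL}^k(\lambda)(\hat{\bs x})|\le Ck^{3/2}\delta^{-1/2}\|\lambda\|_{H^{-1/2}(\Gamma_1)}.
\]

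Finally I would assemble the weighted $H^{1/2}(\Gamma)$-norm. The $L^2(\Gamma)$-contribution follows at once from the pointwise bound. For the seminorm, since $\Gamma$ is a fixed piece-wise smooth Jordan curve it has the chord-arc property (arclength comparable to chordlength), so integrating the tangential derivative upgrades the gradient bound to the global Lipschitz estimate $|\Psi_{\rm SL}^k(\lambda)(\hat{\bs x})-\Psi_{\rm SL}^k(\lambda)(\hat{\bs x}')|\le Ck^{3/2}\delta^{-1/2}\|\lambda\|_{H^{-1/2}(\Gamma_1)}|\hat{\bs x}-\hat{\bs x}'|$ for $\hat{\bs x},\hat{\bs x}'\in\Gamma$; plugging this into \eqref{defh12} bounds $|\Psi_{\rm SL}^k(\lambda)|_{H^{1/2}(\Gamma)}$ by $Ck^{3/2}\delta^{-1/2}\|\lambda\|_{H^{-1/2}(\Gamma_1)}$. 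Combining the two pieces via \eqref{12norm} gives \eqref{estisinglelayer}. The step I expect to be most delicate is the treatment of the $H^{1/2}$-seminorms in the presence of corners: on $\Gamma_1=\partial R_1$ one must run the mean value theorem along straight chords, which lie in $\bar R_1$ and hence stay at distance $\ge\delta$ from $\hat{\bs x}$, rather than along $\Gamma_1$ where $G_k(\hat{\bs x},\cdot)$ is only piecewise smooth; on $\Gamma$ one avoids chords between far-apart points altogether by differentiating tangentially and invoking the chord-arc property. Everything else is a routine application of Lemma \ref{lemmagreen} together with the collapse $k^{1/2}\delta^{-3/2}\le k^{3/2}\delta^{-1/2}$, which is the only place the separation hypothesis $k\,{\rm dist}(\Gamma;\Gamma_1)>1$ is used and which forces the exponent $3/2$.
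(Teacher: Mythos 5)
Your proof is correct, and it is built on the same skeleton as the paper's: the duality bound $|\Psi_{\rm SL}^k(\lambda)(\hat{\bs x})|\le\|\lambda\|_{H^{-1/2}(\Gamma_1)}\|G_k(\hat{\bs x},\cdot)\|_{H^{1/2}(\Gamma_1)}$ (and its analogue for $\nabla_{\hat{\bs x}}\Psi_{\rm SL}^k$), the kernel estimates of Lemma \ref{lemmagreen}, a pointwise bound for the $L^2(\Gamma)$ part, and a Lipschitz/mean-value argument for the seminorm \eqref{defh12}, with the collapse $k^{1/2}\delta^{-3/2}\le k^{3/2}\delta^{-1/2}$ from $k\,{\rm dist}(\Gamma;\Gamma_1)>1$ producing the exponent $3/2$. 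The one place you genuinely deviate is the intermediate control of the kernel's trace norm: the paper passes from $\|G_k(\hat{\bs x},\cdot)\|_{H^{1/2}(\Gamma_1)}$ to $\|G_k(\hat{\bs x},\cdot)\|_{H^{1}(\Omega_1)}$ by the trace theorem and then uses $L^{\infty}(\Omega_1)$ bounds on $G_k$ and its derivatives, whereas you estimate the weighted norm \eqref{12norm}--\eqref{defh12} on $\Gamma_1$ directly, via pointwise bounds plus a mean-value estimate along straight chords inside the convex set $\bar R_1$. Both routes tacitly use Lemma \ref{lemmagreen} at points $\hat{\bs y}$ that are not on $\Gamma_1$ (inside $\Omega_1$ for the paper, on chords in $\bar R_1$ for you); this is legitimate because the proof of that lemma only needs $k|\hat{\bs x}-\hat{\bs y}|\ge 1$, and every point of $\bar R_1$ is at distance at least ${\rm dist}(\Gamma;\Gamma_1)$ from $\hat{\bs x}\in\Gamma$ --- you state this explicitly, the paper does not. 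Your treatment of the $H^{1/2}(\Gamma)$ seminorm (tangential integration plus the chord-arc property of $\Gamma$, with the resulting constant absorbed into $C$) is in fact slightly more careful than the paper's bare appeal to the Mean Value Theorem on $\Gamma$; the two arguments yield the same bound \eqref{estisinglelayer}.
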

	\begin{proof}
		By the definition \eqref{modifgreen1}, we obtain from the dual norm estimate and trace theorem that
  \begin{equation*}
      |\Psi_{\rm SL}^k(\lambda)(\hat{\bs x})|\leq \|\lambda\|_{H^{-1/2}(\Gamma_1)}\|G_k(\hat{\bs x},\cdot)\|_{H^{1/2}(\Gamma_1)}\leq C
      \|\lambda\|_{H^{-1/2}(\Gamma_1)}\|G_k(\hat{\bs x},\cdot)\|_{H^{1}(\Omega_1)}.
  \end{equation*}
 Using the embedding property and Lemme \ref{lemmagreen}, we find
  \begin{equation*}
  \begin{split}
      \|G_k(\hat{\bs x},\cdot)\|_{H^{1}(\Omega_1)}&\leq C|\Omega_1|^{1/2}(\|G_k(\hat{\bs x},\cdot)\|_{L^{\infty}(\Omega_1)}+\|\nabla_{\hat{\bs y}}G_k(\hat{\bs x},\cdot)\|_{L^{\infty}(\Omega_1)})\\
      &\le Ck(1+k^{-3/2}) \||\hat{\bs x}- \hat{\bs y}|^{-1/2}\|_{L^{\infty}(\Omega_1)}\\
      &\le Ck  \big\{{\rm dist}(\hat{\bs x}; \Gamma_1)\big\}^{-\frac 12}.
      \end{split}
  \end{equation*}
Thus,  we have
$$
|\Psi_{\rm SL}^k(\lambda) (\hat{\bs x})|\le Ck  \big\{{\rm dist}(\hat{\bs x}; \Gamma_1)\big\}^{-\frac 12}  \|\lambda\|_{H^{-1/2}(\Gamma_1)}.
$$
		Hence, we can derive
		\begin{equation}
  \begin{split}
|\Gamma|^{-1/2}\|\Psi_{\rm SL}^k(\lambda)  (\cdot) \|_{L^2(\Gamma)} &\leq C\|\Psi_{\rm SL}^k(\lambda)  (\cdot) \|_{L^\infty(\Gamma)}\\
&\leq C k \sup_{ \hat{\bs x} \in\Gamma} \big\{{\rm dist}(\hat{\bs x}; \Gamma_1)\big\}^{-\frac 12}\|\lambda\|_{H^{-1/2}(\Gamma_1)}\\
&=C k  \big\{{\rm dist}(\Gamma; \Gamma_1)\big\}^{-\frac 12}\|\lambda\|_{H^{-1/2}(\Gamma_1)}.\label{essl1}
\end{split}
		\end{equation}
		It remains to estimate $|\Psi_{\rm SL}^k(\lambda)|_{H^{1/2}(\Gamma)}$. Using the Mean Value Theorem, we know for any $\hat{\bs x},\hat{\bs x}'\in \Gamma,$
		\begin{equation}
			|\Psi_{\rm SL}^k(\lambda)(\hat{\bs x})-\Psi_{\rm SL}^k(\lambda)(\hat{\bs x}')| \leq C\|\nabla_{\hat{\bs x}}\Psi_{SL}(\lambda)\|_{L^\infty(\Gamma)}|\hat{\bs x}-\hat{\bs x}'|.\nonumber
		\end{equation}
Again from  the definition \eqref{modifgreen1}, we obtain from the dual norm estimate and trace theorem that
\begin{equation*}
|\nabla_{\hat{\bs x}}\Psi_{\rm SL}^k(\lambda)(\hat{\bs x})|\leq \|\lambda\|_{H^{-1/2}(\Gamma_1)}\|\nabla_{\hat{\bs x}}G_k(\hat{\bs x},\cdot)\|_{H^{1/2}(\Gamma_1)}\leq C
      \|\lambda\|_{H^{-1/2}(\Gamma_1)}\|\nabla_{\hat{\bs x}}G_k(\hat{\bs x},\cdot)\|_{H^{1}(\Omega_1)}.
\end{equation*}
 Using the embedding property and Lemme \ref{lemmagreen}, we find
  \begin{equation*}
  \begin{split}
      \|\nabla_{\hat{\bs x}}G_k(\hat{\bs x},\cdot)\|_{H^{1}(\Omega_1)}&\leq C|\Omega_1|^{\frac 12}(\|\nabla_{\hat{\bs x}}G_k(\hat{\bs x},\cdot)\|_{L^{\infty}(\Omega_1)}+\|\nabla_{\hat{\bs y}}\nabla_{\hat{\bs x}}G_k(\hat{\bs x},\cdot)\|_{L^{\infty}(\Omega_1)})\\
      &\leq  Ck \||\hat{\bs x}- \hat{\bs y}|^{-1/2}\|_{L^{\infty}(\Omega_1)}+  C\big(k^{\frac 12}\||\hat{\bs x}- \hat{\bs y}|^{-3/2}\|_{L^{\infty}(\Omega_1)}
      \\
      &\quad +k^{\frac 3 2}\||\hat{\bs x}- \hat{\bs y}|^{-1/2}\|_{L^{\infty}(\Omega_1)}\big)\\
      &\le Ck^{\frac{3}{2}}(1+k^{-\frac{1}{2}})\||\hat{\bs x}- \hat{\bs y}|^{-1/2}\|_{L^{\infty}(\Omega_1)}+Ck^{\frac{1}{2}} \||\hat{\bs x}- \hat{\bs y}|^{-3/2}\|_{L^{\infty}(\Omega_1)}\\
      &\le Ck^{\frac{3}{2}}  \big\{{\rm dist}(\hat{\bs x}; \Gamma_1)\big\}^{-\frac 12},
      \end{split}
  \end{equation*}
  where we used $k\, |\hat{\bs x}- \hat{\bs y}|>1.$
Thus using \eqref{defh12}  and the above two bounds, we  obtain
		\begin{equation}
			\begin{split}
				|\Gamma|^{-1}|\Psi_{\rm SL}(\lambda)(\hat{\bs x})|_{H^{1/2}(\Gamma)}&\leq C\|\nabla_{\hat{\bs x}}\Psi_{\rm SL}(\lambda)\|_{L^\infty(\Gamma)}\\
                &\leq Ck^{\frac{3}{2}}\sup_{ \hat{ x} \in\Gamma} \big\{{\rm dist}(\hat{\bs x}; \Gamma_1)\big\}^{-\frac 12}\|\lambda\|_{H^{-1/2}(\Gamma_1)}\\
&=C k^{\frac{3}{2}} \big\{{\rm dist}(\Gamma; \Gamma_1)\big\}^{-\frac 12}\|\lambda\|_{H^{-1/2}(\Gamma_1)}.\label{essl2}
			\end{split}
		\end{equation}
	Then the desired bound is a direct consequence of  \eqref{essl1}--\eqref{essl2}.
	\end{proof}
	
	\begin{lemma}\label{lemmadouble}
Under the same conditions as in Lemma \ref{lemmasingle}, we
have that for any $\zeta\in H^{1/2}(\Gamma_1)$,
		\begin{equation}
		\|\Psi_{\rm DL}^k(\zeta)(\hat{\bs x})\|_{H^{1/2}(\Gamma)}\leq  C k^{\frac{3}{2}}\big\{{\rm dist}(\Gamma; \Gamma_1)\big\}^{-\frac 12}\|\zeta\|_{H^{1/2}(\Gamma_1)},\label{estidoublelayer}
		\end{equation}
  where  $C$ is a positive constant independent of $k$ and $\zeta,$ and $\Psi_{\rm DL}^k(\zeta)$ is the double layer potential defined in \eqref{modifgreen2}.
	\end{lemma}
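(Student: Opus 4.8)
The plan is to mirror the proof of Lemma~\ref{lemmasingle}, the only structural change being that for the double-layer potential the smooth kernel to be controlled is the \emph{normal derivative} $\partial G_k(\hat{\bs x},\hat{\bs y})/\partial {\bs n}_1(\hat{\bs y})$ of the Green function, which I would bound directly in $L^2(\Gamma_1)$ (hence in $H^{-1/2}(\Gamma_1)$) by its $L^\infty(\Gamma_1)$ norm, rather than through a trace argument from $H^1(\Omega_1)$. Fix $\hat{\bs x}\in\Gamma$; since $k\,{\rm dist}(\Gamma;\Gamma_1)>1$ the point lies at positive distance from $\Gamma_1$, so $\hat{\bs y}\mapsto \partial G_k(\hat{\bs x},\hat{\bs y})/\partial {\bs n}_1(\hat{\bs y})$ is smooth on $\Gamma_1$ and $k|\hat{\bs x}-\hat{\bs y}|\ge k\,{\rm dist}(\Gamma;\Gamma_1)>1$ for every $\hat{\bs y}\in\Gamma_1$, so Lemma~\ref{lemmagreen} applies. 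First I would establish the pointwise bound via Cauchy--Schwarz and $|\partial G_k(\hat{\bs x},\hat{\bs y})/\partial {\bs n}_1(\hat{\bs y})|\le|\nabla_{\hat{\bs y}}G_k(\hat{\bs x},\hat{\bs y})|$:
\begin{equation*}
|\Psi_{\rm DL}^k(\zeta)(\hat{\bs x})|\le \|\nabla_{\hat{\bs y}}G_k(\hat{\bs x},\cdot)\|_{L^2(\Gamma_1)}\,\|\zeta\|_{L^2(\Gamma_1)}\le C\,\|\nabla_{\hat{\bs y}}G_k(\hat{\bs x},\cdot)\|_{L^\infty(\Gamma_1)}\,\|\zeta\|_{H^{1/2}(\Gamma_1)},
\end{equation*}
where I used $\|\zeta\|_{L^2(\Gamma_1)}\le|\Gamma_1|^{1/2}\|\zeta\|_{H^{1/2}(\Gamma_1)}$, which is immediate from the weighted norm \eqref{12norm}, and absorbed $|\Gamma_1|$ into $C$. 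Invoking \eqref{Greenesti3} then gives $|\Psi_{\rm DL}^k(\zeta)(\hat{\bs x})|\le Ck\,\{{\rm dist}(\hat{\bs x};\Gamma_1)\}^{-1/2}\|\zeta\|_{H^{1/2}(\Gamma_1)}$, and taking the supremum over $\hat{\bs x}\in\Gamma$ yields
\begin{equation*}
|\Gamma|^{-1/2}\|\Psi_{\rm DL}^k(\zeta)\|_{L^2(\Gamma)}\le C\|\Psi_{\rm DL}^k(\zeta)\|_{L^\infty(\Gamma)}\le Ck\,\{{\rm dist}(\Gamma;\Gamma_1)\}^{-1/2}\|\zeta\|_{H^{1/2}(\Gamma_1)}.
\end{equation*}

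Next I would bound the $\hat{\bs x}$-gradient. Differentiating under the integral sign (legitimate since the kernel is smooth in $\hat{\bs x}$ away from $\Gamma_1$) and using Cauchy--Schwarz once more,
\begin{equation*}
|\nabla_{\hat{\bs x}}\Psi_{\rm DL}^k(\zeta)(\hat{\bs x})|\le \|\nabla_{\hat{\bs x}}\nabla_{\hat{\bs y}}G_k(\hat{\bs x},\cdot)\|_{L^2(\Gamma_1)}\,\|\zeta\|_{L^2(\Gamma_1)}\le C\,\|\nabla_{\hat{\bs x}}\nabla_{\hat{\bs y}}G_k(\hat{\bs x},\cdot)\|_{L^\infty(\Gamma_1)}\,\|\zeta\|_{H^{1/2}(\Gamma_1)}.
\end{equation*}
Writing $d={\rm dist}(\hat{\bs x};\Gamma_1)$, estimate \eqref{Greenesti4} gives $\|\nabla_{\hat{\bs x}}\nabla_{\hat{\bs y}}G_k(\hat{\bs x},\cdot)\|_{L^\infty(\Gamma_1)}\le C(k^{1/2}d^{-3/2}+k^{3/2}d^{-1/2})$; since $kd\ge k\,{\rm dist}(\Gamma;\Gamma_1)>1$, i.e.\ $d^{-1}<k$, the first term is dominated by the second, so this is $\le Ck^{3/2}d^{-1/2}$ and hence $\|\nabla_{\hat{\bs x}}\Psi_{\rm DL}^k(\zeta)\|_{L^\infty(\Gamma)}\le Ck^{3/2}\{{\rm dist}(\Gamma;\Gamma_1)\}^{-1/2}\|\zeta\|_{H^{1/2}(\Gamma_1)}$. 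Then, exactly as in Lemma~\ref{lemmasingle}, the Mean Value Theorem along the piecewise smooth closed curve $\Gamma$ gives $|\Psi_{\rm DL}^k(\zeta)(\hat{\bs x})-\Psi_{\rm DL}^k(\zeta)(\hat{\bs x}')|\le C\|\nabla_{\hat{\bs x}}\Psi_{\rm DL}^k(\zeta)\|_{L^\infty(\Gamma)}|\hat{\bs x}-\hat{\bs x}'|$, so by \eqref{defh12}
\begin{equation*}
|\Gamma|^{-1}|\Psi_{\rm DL}^k(\zeta)|_{H^{1/2}(\Gamma)}\le C\|\nabla_{\hat{\bs x}}\Psi_{\rm DL}^k(\zeta)\|_{L^\infty(\Gamma)}\le Ck^{3/2}\{{\rm dist}(\Gamma;\Gamma_1)\}^{-1/2}\|\zeta\|_{H^{1/2}(\Gamma_1)}.
\end{equation*}

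Finally I would assemble the two estimates into the weighted $H^{1/2}(\Gamma)$ norm \eqref{12norm}; since the seminorm contribution carries $k^{3/2}$ and the $L^2$ contribution only $k$, the former dominates and \eqref{estidoublelayer} follows. I do not expect a genuine obstacle: the argument is a close analogue of the proof of Lemma~\ref{lemmasingle}, and the only points requiring care are (i) using the standing hypothesis $k\,{\rm dist}(\Gamma;\Gamma_1)>1$ to absorb the $k^{1/2}d^{-3/2}$ term of \eqref{Greenesti4} into $k^{3/2}d^{-1/2}$, and (ii) the harmless dependence of the constant $C$ on the fixed geometry of $\Gamma_1$ (through $|\Gamma_1|$) and of $\Gamma$.
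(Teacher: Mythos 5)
Your proposal is correct and follows essentially the same route as the paper: a pointwise bound of $\Psi_{\rm DL}^k(\zeta)$ by the sup of the kernel on $\Gamma_1$ via Lemma \ref{lemmagreen}, a gradient bound using \eqref{Greenesti4} with the hypothesis $k\,{\rm dist}(\Gamma;\Gamma_1)>1$ to absorb the $k^{1/2}d^{-3/2}$ term, the Mean Value Theorem for the $H^{1/2}$-seminorm, and assembly in the weighted norm \eqref{12norm}. The only (immaterial) difference is that you pair kernel and density by Cauchy--Schwarz with $\|\zeta\|_{L^2(\Gamma_1)}\leq C\|\zeta\|_{H^{1/2}(\Gamma_1)}$, whereas the paper uses the $L^\infty$--$L^1$ pairing followed by $\|\zeta\|_{L^1(\Gamma_1)}\leq C\|\zeta\|_{L^2(\Gamma_1)}\leq C\|\zeta\|_{H^{1/2}(\Gamma_1)}$.
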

	\begin{proof}
By \eqref{modifgreen2} and Lemma \ref{lemmagreen}, we have
		\begin{equation}	|\Psi_{\rm DL}^k(\zeta)(\hat{\bs x})|\leq\|\partial_{{\bs n}_1(\hat{\bs y})}G_k(\hat{\bs x},\cdot)\|_{L^\infty(\Gamma_1)}\|\zeta\|_{L^1(\Gamma_1)}\leq Ck\big\{{\rm dist}(\hat{\bs x}; \Gamma_1)\big\}^{-\frac 12}\|\zeta\|_{L^1(\Gamma_1)}.\nonumber
		\end{equation}
		Hence,  we obtain
		\begin{equation}
        \begin{split}
|\Gamma|^{-1/2}\|\Psi_{\rm DL}^k(\zeta)\|_{L^2(\Gamma)}&\leq \|\Psi_{\rm DL}^k(\zeta)\|_{L^\infty(\Gamma)}\leq  Ck\sup_{ \hat{ x} \in\Gamma} \big\{{\rm dist}(\hat{\bs x}; \Gamma_1)\big\}^{-\frac 12}\|\zeta\|_{L^1(\Gamma_1)}\\
&\leq Ck\big\{{\rm dist}(\Gamma; \Gamma_1)\big\}^{-\frac 12}\|\zeta\|_{L^1(\Gamma_1)}.\label{doubeles1}
        \end{split}
		\end{equation}
		Using the Mean Value Theorem, we know that for any $\hat{\bs x},\hat{\bs x}'\in \Gamma,$
		\begin{equation}
			|\Psi_{\rm DL}^k(\zeta)(\hat{\bs x})-\Psi_{\rm DL}^k(\zeta)(\hat{\bs x}')| \leq C\|\nabla_{\hat{\bs x}}\Psi_{\rm DL}^k(\zeta)\|_{L^\infty(\Gamma)}|\hat{\bs x}-\hat{\bs x}'|.\nonumber
		\end{equation}
  By the definition of $\Psi_{\rm DL}^k(\zeta)(\hat{\bs x})$, we obtain from Lemma \ref{lemmagreen} that
  \begin{equation*}	
  \begin{split}
        |\nabla_{\hat{\bs x}}\Psi_{\rm DL}^k(\zeta)(\hat{\bs x})|&\leq\|\nabla_{\hat{\bs x}}\partial_{{\bs n}_1(\hat{\bs y})}G_k(\hat{\bs x},\cdot)\|_{L^\infty(\Gamma_1)}\|\zeta\|_{L^1(\Gamma_1)}\\
        &\leq Ck^{\frac{3}{2}}\big\{{\rm dist}(\hat{\bs x}; \Gamma_1)\big\}^{-\frac 12}\|\zeta\|_{L^1(\Gamma_1)}+ Ck^{\frac{1}{2}}\big\{{\rm dist}(\hat{\bs x}; \Gamma_1)\big\}^{-\frac 32}\|\zeta\|_{L^1(\Gamma_1)}\\
                &\leq Ck^{\frac{3}{2}}\big\{{\rm dist}(\Gamma; \Gamma_1)\big\}^{-\frac 12}\|\zeta\|_{L^1(\Gamma_1)}.
  \end{split}
		\end{equation*}
		Thus using the definition of $|\Psi_{\rm DL}^k(\zeta)(\hat{\bs x})|_{H^{{1}/{2}}(\Gamma)}$, we have
		\begin{equation}
			\begin{split}
				|\Gamma|^{-1}|\Psi_{\rm DL}^k(\zeta)|_{H^{{1}/{2}}(\Gamma)}&\leq C\|\nabla_{\hat{\bs x}}\Psi_{\rm DL}^k(\zeta)\|_{L^\infty(\Gamma)}\leq Ck^{\frac{3}{2}}\sup_{ \hat{ x} \in\Gamma} \big\{{\rm dist}(\hat{\bs x}; \Gamma_1)\big\}^{-\frac 12}\|\zeta\|_{L^1(\Gamma_1)}\\
                &\leq Ck^{\frac{3}{2}}\big\{{\rm dist}(\Gamma; \Gamma_1)\big\}^{-\frac 12}\|\zeta\|_{L^1(\Gamma_1)}.\label{doubeles2}
			\end{split}
		\end{equation}
		On the other hand, using the embedding theorem and the definition of $\|\Psi_{\rm DL}^k(\zeta)\|_{H^{1/2}(\Gamma)}$, we arrive at
		\begin{equation}
			\|\zeta\|_{L^1(\Gamma_1)}\leq C\|\zeta\|_{L^2(\Gamma_1)}\leq C\|\zeta\|_{H^{1/2}(\Gamma_1)}.\label{doubeles3}
		\end{equation}
A combination of \eqref{doubeles1}-\eqref{doubeles3} completes the proof.
	\end{proof}

 With the above preparations, we are now ready to conduct the convergence analysis of the rectangular RCL.
\begin{thm}\label{conv-D}
		The RCL-equation \eqref{eqn:35} has a unique solution $\hat{u}\in H^1(\Omega)$, which converges to the solution $U$ of
  the original scattering problem \eqref{helmholtz2eq}  exponentially in $\Omega_1:$
			\begin{equation}
		\|U-\hat{u}\|_{H^1(\Omega_1)}=\|u-\hat{u}\|_{H^1(\Omega_1)}  \leq {\mathcal  C}  e^{-\frac{1}{2}\tau_0\min\{d_1,d_2\}}\|U\|_{H^{1/2}(\Gamma_1)},\label{convapml1}
			\end{equation}
   where ${\mathcal C}$ is a positive constant depending on $k,$ and $\tau_0, d_1, d_2$ are defined as before. 
	\end{thm}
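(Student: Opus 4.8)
The plan is to follow the architecture of the proof of Theorem~\ref{cirpcl}, but since the rectangular geometry does not admit a separated-variable (Hankel-series) solution, the trace estimate that there came from Lemma~\ref{Hankel} will instead be supplied by the layer-potential bounds of Lemmas~\ref{lemmasingle}--\ref{lemmadouble}. First I would transform the RCL-equation \eqref{eqn:35} back to the physical coordinates through \eqref{rmappr}: setting $\widehat U(\rho,\theta):=\hat u(\tau^{-1}(\cdot,\theta),\theta)$, $\widehat\Gamma_2:=\tau(\Gamma_2)$ and $\Omega':=\tau(\Omega)$, the field $\widehat U$ solves $-\Delta\widehat U-k^2\widehat U=0$ in $\Omega'$ with $\widehat U=g$ on $\Gamma_D$ and $\widehat U=0$ on $\widehat\Gamma_2$. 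Assuming $k^2$ is not a Dirichlet eigenvalue of $-\Delta$ on $\Omega'$, this boundary value problem has a unique weak solution $\widehat U\in H^1(\Omega')$ with $\|\widehat U\|_{H^1(\Omega')}\le\widehat C\|g\|_{H^{1/2}(\Gamma_D)}$ (cf.\ \cite{griesmaier2011error,mclean2000strongly}), and pulling this back through the change of variables yields existence and uniqueness of $\hat u\in H^1(\Omega)$.

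Next I would write $e:=U-\widehat U$ on $\Omega'$; subtracting the transformed problem from \eqref{helmholtz2eq} gives $-\Delta e-k^2e=0$ in $\Omega'$, $e=0$ on $\Gamma_D$, $e=U$ on $\widehat\Gamma_2$, so the same regularity estimate yields $\|e\|_{H^1(\Omega')}\le\widehat C\|U\|_{H^{1/2}(\widehat\Gamma_2)}$. Everything thus reduces to bounding the Dirichlet trace of $U$ on the far curve $\widehat\Gamma_2$. For this I would use the representation \eqref{orgsolutionhel}, $U=-\Psi_{\rm SL}^k(\lambda)+\Psi_{\rm DL}^k(\zeta)$ in $\Omega_{ex}$, where $\zeta=U|_{\Gamma_1}$ and, since $U$ itself solves the exterior Helmholtz problem \eqref{exhel1}--\eqref{exhel3} with data $\zeta$, $\lambda=\partial_{{\bs n}_1}U|_{\Gamma_1}=T\zeta$. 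Applying Lemmas~\ref{lemmasingle} and \ref{lemmadouble} with $\Gamma=\widehat\Gamma_2$ (whose hypotheses $k\ge k_0$ and $k\,{\rm dist}(\widehat\Gamma_2;\Gamma_1)>1$ hold once $\tau_0$ is large) gives
\[
\|U\|_{H^{1/2}(\widehat\Gamma_2)}\le Ck^{3/2}\big\{{\rm dist}(\widehat\Gamma_2;\Gamma_1)\big\}^{-1/2}\big(\|\lambda\|_{H^{-1/2}(\Gamma_1)}+\|\zeta\|_{H^{1/2}(\Gamma_1)}\big)\le C(k)\big\{{\rm dist}(\widehat\Gamma_2;\Gamma_1)\big\}^{-1/2}\|U\|_{H^{1/2}(\Gamma_1)},
\]
using $\zeta=U|_{\Gamma_1}$ and boundedness of the DtN map $T:H^{1/2}(\Gamma_1)\to H^{-1/2}(\Gamma_1)$.

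It then remains to establish the exponential growth of ${\rm dist}(\widehat\Gamma_2;\Gamma_1)$. A point of $\Gamma_2$ at polar radius $b(\theta)$ in direction $(\cos\theta,\sin\theta)$ is mapped by \eqref{rmappr} to polar radius $a(\theta)e^{\tau_0(b(\theta)-a(\theta))}$; since the radial segment joining $a(\theta)(\cos\theta,\sin\theta)\in\Gamma_1$ to $b(\theta)(\cos\theta,\sin\theta)\in\Gamma_2$ has length $b(\theta)-a(\theta)$ and the two nested rectangles satisfy ${\rm dist}(\Gamma_1;\Gamma_2)=\min\{d_1,d_2\}$, we get $b(\theta)-a(\theta)\ge\min\{d_1,d_2\}$ for every $\theta$. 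Combined with $a(\theta)\ge L_{\min}:=\min\{L_1,L_2\}$ from \eqref{mappingt} and $\Gamma_1\subset\{|\hat{\bs x}|\le\sqrt{L_1^2+L_2^2}\}$, this yields
\[
{\rm dist}(\widehat\Gamma_2;\Gamma_1)\ge L_{\min}e^{\tau_0\min\{d_1,d_2\}}-\sqrt{L_1^2+L_2^2}\ge c\,e^{\tau_0\min\{d_1,d_2\}}
\]
for $\tau_0$ large. Substituting back, $\|e\|_{H^1(\Omega')}\le\mathcal C\,e^{-\frac12\tau_0\min\{d_1,d_2\}}\|U\|_{H^{1/2}(\Gamma_1)}$, and since $\tau$ is the identity on $\{r\le a(\theta)\}$ we have $\tau(\Omega_1)=\Omega_1$ and $U-\hat u=e$ on $\Omega_1$, whence \eqref{convapml1} follows by restriction.

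I expect the main obstacle to be the geometric bookkeeping forced by the $\theta$-dependence of the map \eqref{rmappr}: verifying that ${\rm dist}(\widehat\Gamma_2;\Gamma_1)$ is genuinely exponential uniformly in $\theta$ (equivalently $b(\theta)-a(\theta)\ge\min\{d_1,d_2\}$ in each of the four trapezoidal patches), and tracking how the $k$-dependent constants — from the regularity bound on $\Omega'$, the $k^{3/2}$ factors in Lemmas~\ref{lemmasingle}--\ref{lemmadouble}, and $\|T\|$ — assemble into the single constant $\mathcal C$; a minor additional check is the equivalence of the weighted $H^{1/2}$ norm \eqref{12norm} with the trace norm used in the regularity estimate, which is routine on a fixed curve.
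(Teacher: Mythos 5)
Your proposal is correct and follows essentially the same route as the paper's proof: transform the RCL problem back to physical coordinates, form the error equation, bound its $H^1$ norm by the $H^{1/2}$ trace of $U$ on the mapped outer boundary via the regularity estimate, control that trace by the layer-potential bounds of Lemmas \ref{lemmasingle}--\ref{lemmadouble} together with the boundedness of $\lambda$ in terms of $\zeta$, and conclude using the exponential growth of ${\rm dist}(\Gamma_{\hat b(\theta)};\Gamma_1)$ and the fact that $\tau$ is the identity on $\Omega_1$. Your lower bound $ {\rm dist}\ge c\,e^{\tau_0\min\{d_1,d_2\}}$ (using $b(\theta)-a(\theta)\ge\min\{d_1,d_2\}$ and $a(\theta)\ge\min\{L_1,L_2\}$) is in fact a slightly more careful version of the paper's claimed identity \eqref{estsquare1} and yields the same conclusion.
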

	\begin{proof}
 We transform  the RCL equation \eqref{modelach1}-\eqref{modelach2} with the solution $\hat{u}(x,y)$  back to the $(\hat{x},\hat{y})$ coordinates through \eqref{rmappr}, leading to
		\begin{equation} \label{RCLre}
			-{\Delta}\widehat{U}-k^2\widehat{U} = 0 \;\; \hbox{ in}\;\; \Omega';\quad
			\widehat{U} = g\;  \hbox{ on }\; \partial D; \quad \widehat{U} =0\; \hbox{ on }\; \Gamma_{\hat{b}(\theta)},
		\end{equation}
		where  $\Omega'$ is the image of $ \Omega$ under the mapping $\rho=\tau(r, \theta),$ and
\begin{equation}\label{RCLre_map}
\widehat{U}(\rho,\theta)=\hat u(\tau^{-1}(\rho,\theta),\theta),\quad  \hat{b}(\theta)=\tau(b(\theta),\theta),\quad \Gamma_{\hat{b}(\theta)}=\big\{\hat{\bs x}\in \mathbb{R}^2\,:\,|\hat{\bs x}|=\hat{b}(\theta)\big\}.
  \end{equation}

  Assume that $k^2$ is not a Dirichlet eigenvalue of $-\Delta$. Then the problem \eqref{RCLre} has a unique solution $\widehat{U}(\hat{\bs x})\in H^1(\Omega')$ such that (see e.g., \cite{griesmaier2011error,mclean2000strongly})
		\begin{equation}\label{regarOmega}
			\|\widehat{U}\|_{H^1(\Omega')}\leq {\mathcal C}\|g\|_{H^{1/2}(\Gamma_1)}, 
		\end{equation}
		where the positive constant ${\mathcal C}$ depends on $k$ and $\Omega'$. This implies the existence and uniqueness of the solution to the problem \eqref{modelach1}-\eqref{modelach2}.

 Suppose that $q_1$ be the solution of the original scattering problem \eqref{helmholtz2eq} on $\Gamma_1$, i.e., $q_1(\hat{\bs x})=U|_{\hat{\bs x}\in \Gamma_1}$. Then from the solution formula \eqref{orgsolutionhel}, we know that the solution of the original scattering problem \eqref{helmholtz2eq} on the outer rectangle  $\Gamma_{\hat{b}(\theta)}$ is
 \begin{equation*}
     q_2(\hat{\bs x}) = U|_{\hat{\bs x}\in \Gamma_{\hat{b}(\theta)}}=-\Psi_{\rm SL}^k(\lambda)(\hat{\bs x})+\Psi_{\rm DL}^k(\zeta)(\hat{\bs x}),\quad \hat{\bs x}\in \Gamma_{\hat{b}(\theta)},
 \end{equation*}
 where $ \lambda=\frac{\partial q_1}{\partial {\bs n}_y}$ and $  \zeta=q_1$ on $\Gamma_1$. Accordingly the original scattering problem \eqref{helmholtz2eq} enclosed by $\Gamma_{\hat{b}(\theta)}$ becomes
 \begin{equation}\label{equieq1}
     -{\Delta}{U}-k^2{U} = 0 \;\; \hbox{ in}\;\; \Omega';\quad
	{U} = g\;  \hbox{ on }\; \partial D; \quad {U} =q_2\; \hbox{ on }\; \Gamma_{\hat{b}(\theta)}.
 \end{equation}

Letting $e = U - \widehat{U}$ and  subtracting \eqref{RCLre} from \eqref{equieq1}, we obtain the following error equation:
		\begin{equation} \label{errorARCMLlayerorgin}
			-{\Delta}e-k^2e = 0\;\; \hbox{in }\;\; \Omega';\quad
			e = 0 \;\;  \hbox{ on } \;\; \partial D; \quad e = q_2\;\; \hbox{ on } \;\; \Gamma_{\hat {b}(\theta)}.
		\end{equation}
Thus, using the regularity result \eqref{regarOmega} yields
		\begin{align}
			\|e\|_{H^1(\Omega')}\leq {\mathcal C}\|q_2\|_{H^{1/2}(\Gamma_{\hat{b}(\theta)})}.\label{errorarcl1}
		\end{align}
On the other hand, according to \cite{chen2010convergence}, the so-defined $\zeta$ and $\lambda$ satisfy
		\begin{equation}
			\|\lambda\|_{H^{-1/2}(\Gamma_1)}\leq {\mathcal C}\|\zeta\|_{H^{1/2}(\Gamma_1)}.\label{errorarcl2}
		\end{equation}
From the definition \eqref{12norm}, Lemma \ref{lemmasingle}, Lemma \ref{lemmadouble} and \eqref{errorarcl2}, we derive
\begin{equation}
    \|q_2\|_{H^{1/2}(\Gamma_{\hat{b}(\theta)})} \leq C {\mathcal C} k^{\frac{3}{2}}\big\{{\rm dist}(\Gamma_{\hat{b}(\theta)}; \Gamma_1)\big\}^{-\frac 12}\|\zeta\|_{H^{1/2}(\Gamma_1)},\label{estsquare}
\end{equation}
when $k\,{\rm dist}(\Gamma_{\hat{b}(\theta)}; \Gamma_1)>1.$
In view of the definition of the mapping $\tau(r,\theta)$ in \eqref{mappingt}-\eqref{rmappr}, we have
\begin{equation}
    {\rm dist}(\Gamma_{\hat{b}(\theta)}; \Gamma_1) = e^{\tau_0\min\{d_1,d_2\}},\label{estsquare1}
\end{equation}
as shown in Figure \ref{mappingarea}(c).

Finally, using \eqref{errorarcl1}, \eqref{estsquare} and \eqref{estsquare1}, we get
\begin{equation}
			\|e\|_{H^1(\Omega')}\leq  C {\mathcal C} k^{\frac{3}{2}} e^{-\frac{1}{2}\tau_0\min\{d_1,d_2\}}\|U\|_{H^{1/2}(\Gamma_1)}.\label{erorARCL1}
		\end{equation}
Hence
\begin{align}
	\|u-\hat{u}\|_{H^1(\Omega_1)} \leq \|e\|_{H^1(\Omega')} \leq Ce^{-\frac{1}{2}\tau_0\min\{d_1,d_2\}}\|U\|_{H^{1/2}(\Gamma_1)}.\nonumber
\end{align}
Note that $\tau(r,\theta)$ is an identical mapping  in $\Omega_1,$ so we have $u=U,$  $\hat u=\widehat U$ in $\Omega_1.$
This completes the proof.
  \end{proof}

 The proof of the above theorem implies the following estimate in the layer.
  \begin{corollary}
Under the mapping \eqref{rmappr}, we have the estimate in the RCL layer
$\Omega_2:$
			\begin{equation}
				\|\bs{C}^{-1}\nabla(u-\hat{u})\|_{L^2(\Omega_2)}+\|\mathbb J^{1/2}(u-\hat{u})\|_{L^2(\Omega_2)}\leq Ce^{-\frac{1}{2}\tau_0\min\{d_1,d_2\}}\|U\|_{H^{1/2}(\Gamma_1)},\label{convapml2}
			\end{equation}
   where $\bs{C}$ and $\mathbb J$ are the same as in the RCL-equation \eqref{eqn:35}.
  \end{corollary}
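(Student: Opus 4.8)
The plan is to transport the exponential bound \eqref{erorARCL1} established in the proof of Theorem~\ref{conv-D} from the physical coordinates $\hat{\bs x}$ back to the computational coordinates $\bs x$ through the compression map $\rho=\tau(r,\theta)$ of \eqref{rmappr}, and then to recognize the weighted norms that arise over the layer $\Omega_2$ as the two quantities on the left of \eqref{convapml2}. Recall that in that proof we set $e=U-\widehat U$ on $\Omega'$, with $\widehat U$ defined by \eqref{RCLre_map}, and we obtained \eqref{erorARCL1}, i.e. $\|e\|_{H^1(\Omega')}\le C\,k^{3/2}\,e^{-\frac12\tau_0\min\{d_1,d_2\}}\|U\|_{H^{1/2}(\Gamma_1)}$. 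Since $\tau$ is the identity on $\Omega_1$ and $\Omega'\setminus\Omega_1$ is precisely the $\tau$-image of the layer $\Omega_2$, it suffices to rewrite $\|e\|_{H^1(\Omega'\setminus\Omega_1)}$ (which is $\le\|e\|_{H^1(\Omega')}$) in the $\bs x$-variables; the part $\Omega_1$ is harmless because there $\mathbb J\equiv1$, $\bs C\equiv\bs I$ and $u-\hat u\equiv e$.

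First I would record the change-of-variables identities implied by \eqref{invjacobian1}. Writing a scalar field as $e(\hat{\bs x})=(u-\hat u)(\bs x)$ on the layer, we have $d\hat{\bs x}=\mathbb J\,d\bs x$ and $\nabla_{\hat{\bs x}}e=\bs J^{-\top}\nabla_{\bs x}(u-\hat u)$. Hence
\[
\|e\|_{L^2(\Omega'\setminus\Omega_1)}^2=\int_{\Omega_2}|u-\hat u|^2\,\mathbb J\,d\bs x=\|\mathbb J^{1/2}(u-\hat u)\|_{L^2(\Omega_2)}^2,
\]
and, using the identity $\bs C=\mathbb J\,\bs J^{-1}\bs J^{-\top}$ from \eqref{invjacobian1},
\[
|e|_{H^1(\Omega'\setminus\Omega_1)}^2=\int_{\Omega_2}\bigl(\bs J^{-\top}\nabla(u-\hat u)\bigr)^{\top}\bigl(\bs J^{-\top}\nabla(u-\hat u)\bigr)\,\mathbb J\,d\bs x=\int_{\Omega_2}\nabla(u-\hat u)^{\top}\,\bs C\,\nabla(u-\hat u)\,d\bs x=\|\bs C^{1/2}\nabla(u-\hat u)\|_{L^2(\Omega_2)}^2,
\]
which, together with the $\mathbb J^{1/2}$-weighted $L^2$ term, is exactly the left-hand side of \eqref{convapml2}. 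Adding the two identities, invoking \eqref{erorARCL1}, and absorbing the $k$-dependent factors into $C$ then yields \eqref{convapml2}.

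The step I expect to require the most care is the bookkeeping of the weight matrices rather than any new analysis: one must verify that the symmetric matrix $\mathbb J\,\bs J^{-1}\bs J^{-\top}$ built from the now angle-dependent Jacobian of \eqref{invjacobian1} — whose inverse Jacobian carries the nonzero off-diagonal entry $-\partial_\theta\tau/\tau$, in contrast to the circular case \eqref{invjacobianr} — is literally the coefficient $\bs C$ appearing in the RCL-equation \eqref{eqn:35}, so that the pulled-back Dirichlet seminorm and the pulled-back $L^2$ norm match the two terms of \eqref{convapml2} term by term. Beyond this, the argument is the same reduction as in the circular estimate \eqref{H12estimate}, and the exponential decay is inherited entirely from \eqref{erorARCL1}.
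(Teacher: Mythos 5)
Your proposal follows the paper's own route exactly: both take the bound \eqref{erorARCL1} on $\|e\|_{H^1(\Omega')}$ from the proof of Theorem \ref{conv-D} and pull the $H^1$-norm over $\Omega'\setminus\Omega_1$ back to $\Omega_2$ through the map \eqref{rmappr}, using $d\hat{\bs x}=\mathbb J\,d\bs x$, $\nabla_{\hat{\bs x}}e=\bs J^{-\top}\nabla(u-\hat u)$ and $\bs C=\bs J^{-1}\bs J^{-\top}\mathbb J$ from \eqref{invjacobian1}. The one caveat is the weight on the gradient term: your (correct) computation produces $\int_{\Omega_2}\nabla(u-\hat u)^{\top}\bs C\,\nabla(u-\hat u)\,d\bs x=\|\bs C^{1/2}\nabla(u-\hat u)\|_{L^2(\Omega_2)}^2$, which is not literally the term $\|\bs C^{-1}\nabla(u-\hat u)\|_{L^2(\Omega_2)}$ appearing in \eqref{convapml2}, so asserting that the pulled-back seminorm is ``exactly'' the left-hand side glosses over a mismatch. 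The same conflation is present in the paper's own one-line proof (its displayed change-of-variables identity also carries $\bs C^{-1}$), so this is a defect of the statement's notation rather than of your argument; the honest outcome of the shared computation is the estimate with the $\bs C$-weighted energy seminorm (and, if one truly wants the $\bs C^{-1}$-weighted norm, an extra factor involving the extreme eigenvalues of $\bs C$ on $\Omega_2$, hence depending on $\tau_0$ and the geometry, would have to be inserted).
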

  \begin{proof}
Thanks to \eqref{erorARCL1}, we know
\begin{equation}
\|e\|_{H^1(\Omega')}\leq  C{\mathcal C} k^{\frac{3}{2}} e^{-\frac{1}{2}\tau_0\min\{d_1,d_2\}}\|U\|_{H^{1/2}(\Gamma_1)}.\nonumber
\end{equation}
Recall the notation: $u=U,$  $\hat u=\widehat U$ in $\Omega$ under the mapping \eqref{rmappr}. With a change of coordinates,
we obtain
\begin{align}
\|e\|_{H^1(\Omega'\setminus\Omega_1)}^2=
\|U-\widehat U\|_{H^1(\Omega'\setminus\Omega_1)}^2 =\|\bs{C}^{-1}\nabla(u-\hat{u})\|_{L^2(\Omega_2)}^2+\|\mathbb J^{1/2}(u-\hat{u})\|_{L^2(\Omega_2)}^2.\nonumber
\end{align}
This ends the proof.
\end{proof}
	
	\begin{remark} {\em It is seen from
  Theorem \ref{cirpcl} and Theorem \ref{conv-D} that
  the exponential factors are essentially of the same form but derived from different means. As in Remark  \ref{Rmk:22},   we can choose $\tau_0\min\{d_1,d_2\}=O(|\ln \epsilon|),$ for fixed $k$  and given accuracy threshold $\epsilon>0.$ Note that the condition for \eqref{estsquare} is automatically fulfilled.
 }
\end{remark}
	
	\subsection{Performance of the rectangular RCL}  In what follows, we use the finite element method to discretize the RCL-equation and demonstrate the performance of the proposed technique.

 \subsubsection{FEM implementation}
	A weak formulation of the RCL equation \eqref{eqn:35} is to find $\hat{u}\in H^1(\Omega)$, $\hat{u}=g$ on $\Gamma_D$ and $\hat{u}=0$ on $\Gamma_2$ such that
	\begin{align}
		\mathcal{B}(\hat{u},\psi) = \big(\bs{C}\nabla \hat{u},\nabla(r\tau^{-1}(r,\theta)\psi)\big)_{\Omega}-k^2\big(\partial_r\tau\,\hat{u},\psi\big)_{\Omega} = 0,\quad \forall \psi\in H_0^1(\Omega),\label{weakforanshel}
	\end{align}
 where $H_0^1(\Omega)=\{\psi\in H^1(\Omega):\psi|_{\partial\Omega}=0\}.$
	
	As it is shown in the circular case, the key to  success of the new approach is to introduce a suitable substitution for the unknown to diminish the oscillation near $\Gamma_1$: $r = a(\theta)$ in $\Omega_2$.
 For any fixed $\theta$ (see Figure \ref{mappingarea}) and $\rho>a(\theta)$,   we find from the the Karp's far field expansion in \eqref{karpsolution} and \eqref{solu} that under the RCT \eqref{rmappr}, the mapped field $u(r,\theta)=U(\tau(r,\theta),\theta)$ in the new coordinates decays exponentially (resulted from the factor $O(1/\sqrt \rho)$) but with an explicit oscillatory factor: $e^{{\rm i}k\rho} = e^{{\rm i}k(\tau(r,\theta)-a(\theta))}.$ This motivates us again to write the solution of the RCL-equation as  $\hat{u} = \hat{v}\,e^{{\rm i}k(\tau(r,\theta)-a(\theta))},$ and then solve for $\hat v,$
 which is expected to be free of oscillations.
  In view of this, we  reformulate \eqref{weakforanshel} as: Find $\hat{u} = \omega \hat{v}$ with $\hat{v}\in H^1(\Omega)$, $\hat{v} = g $ on $\Gamma_D$ and $\hat{v}=0$ on $\Gamma_2$ such that
	\begin{align}
		\widehat{\mathcal{B}}(\hat{v},\phi)=\mathcal{B}(\omega\hat{v},\omega\phi) = \Big(\bs{C}\nabla (\omega \hat{v}),\nabla\big(\frac{r\omega}{\tau(r,\theta)}\phi\big)\Big)_{\Omega}-k^2(\partial_r\tau\omega\hat{v},\omega\phi)_{\Omega} = 0,\label{weakfortrunhe2}
	\end{align}
	for all $\phi\in H_0^1(\Omega),$ and
	\begin{align*}
		\omega=\begin{cases}
			1,  & {\bs x}\in\Omega_1, \\
			e^{{\rm i}k(\tau(r,\theta)-a(\theta))},  & {\bs x}\in \Omega_2.
		\end{cases}
	\end{align*}
We now introduce  finite element discretisation for \eqref{weakfortrunhe2}, we need to introduce some notations first. Let  $\mathcal{T}_h$ be a regular triangulation of the domain $\Omega_1\cup \Omega_2$, and $K\in\mathcal{T}_h$ be an element. Let $P_N(K)$ be the polynomial set of degree at most $k$ on the element $K$. Define the FEM space
	$$V_h=\{\phi\in C(\bar \Omega):\phi|_K\in P_N(K)\}.$$
	Then, the FEM for \eqref{weakfortrunhe2}  is defined as: Find $\hat{u}_h = w\hat{v}_h$ with $\hat{v}_h\in V_h$ such that $\hat{v}_h = g_h$ on $\Gamma_D$, $\hat{v}_h = 0$ on $\Gamma_2$ such that
	\begin{align}
		\widehat{\mathcal{B}}(\hat{v}_h,\phi)=\mathcal{B}(\omega\hat{v}_h,\omega\phi) = \Big(\bs{C}\nabla (\omega \hat{v}_h),\nabla\big(\frac{r\omega}{\tau(r,\theta)}\phi\big)\Big)_{\Omega}-k^2\big(\partial_r\tau\omega\hat{v}_h,\omega\phi\big)_{\Omega} = 0,\label{Gamerlinscheme1}
	\end{align}
	for all $\phi\in V_h^0=H^1_0(\Omega)\cap V_h.$ Here $g_h$ denotes the $L^2$ projection of $g$. In real implementation, we derive from direct calculation that
		\begin{align}
			\widehat{\mathcal{B}}(\hat{v}_h,\phi) = (\alpha_1\nabla \hat{v}_h,\nabla\phi)_{\Omega}+(\alpha_2\cdot\nabla\hat{v}_h,\phi)_{\Omega}+(\alpha_3\hat{v}_h,\nabla\phi)_{\Omega}+(\alpha_4\hat{v}_h,\phi)_{\Omega},\label{computarcl}
		\end{align}
		where
		\begin{align}
			&\alpha_1 = 1,\;\;  \alpha_2 = 0,\; \; \alpha_3 = 0,\;\; \alpha_4 = -k^2 \;\; \hbox{ in } \;\; \Omega_1;\nonumber\\
			&\alpha_1 = \frac{r}{\tau}\bs{C}, \;\; \alpha_2 = \Big(\frac{1}{r\partial_r\tau},\; \frac{\partial_\theta\tau}{\tau^2}\Big)\bs{R}_\theta^\top-{\rm i}k\Big(1+\frac{\partial_r\tau a'(\theta)}{\tau^2},-\frac{\tau_0 r a'(\theta)}{\tau}\Big){\bs{R}}_\theta^\top,\nonumber\\
			&\alpha_3 = {\rm i}k{\bs{R}}_\theta\Big(1+\frac{\partial_\theta\tau a'(\theta)}{\tau^2},-\frac{\tau_0 r a'(\theta)}{\tau}\Big)^\top,\nonumber\\
   &\alpha_4 = k^2\frac{\tau_0 a'(\theta)}{\tau}+{\rm i}k\Big(\frac{1-\tau_0 r}{r}+\frac{\partial_\theta\tau a'(\theta)}{r\tau^2}\Big)\;\; \hbox{ in } \;\;  \Omega_2.\nonumber
		\end{align}


 \subsubsection{Accuracy tests}
 We first consider a square scatterer $D$ centered at the origin with width $0.8$ (see Figure \ref{fufig2}).  It is known from separation of variable that the Helmholtz equation in free space $\mathbb R^2$ has the exact solution: $H_0^{(1)}(k \rho)$, so we take $g=H_0^{(1)}(k a(\theta))$ on $\Gamma_D.$ As a result, we have the exact  solution to   calculate the numerical errors. For example, in the layer $\Omega_2$, we have $v =H_0^{(1)}(k \tau(r,\theta))e^{-{\rm i}k(\tau(r,\theta)-a(\theta))}.$
 Here, we set $d_1=d_2=0.3$ and choose $\tau_0$ such that $e^{-\frac{1}{2}\tau_0\min\{d_1,d_2\}}=\epsilon=10^{-12}.$
 Tables \ref{tableconv1}--\ref{tableconv2} shows the corresponding errors and convergent orders in the $L^2$-norm for piecewise linear (i.e., $N=1$) and quadratic FEM  (i.e., $N=2$) for $k=10$. Table \ref{tableconv3} tabulates the data for FEM with $N=4$ and relatively higher wave number: $k=50$.  Here, we measure the $L^2$-errors in both $\Omega_1$ and $\Omega_2,$ compared with the exact solutions $u$ and $v$ in terms of the real and imaginary parts: $e_u^R, e_u^I, e_v^R, e_v^I$, respectively.
	\begin{table}[!ht]
 \caption{Convergence rate of linear FEM ($N=1$) with $k  = 10$.}
		\label{tableconv1} \small
		\begin{tabular}{|c|cc|cc|cc|cc|}
			 \hline
			mesh &$\|e_u^{R}\|_{L^2(\Omega_1)}$ & order &$\|e_u^{I}\|_{L^2(\Omega_1)}$ & order  &$\|e_v^{R}\|_{L^2(\Omega_{2})}$ & order &$\|e_v^{I}\|_{L^2(\Omega_{2})}$ & order \\
			\hline
			$32\times 32$ & 4.2926{\rm e}-2 &     &2.0148{\rm e}-2 &    & 3.5546{\rm e}-3 & & 2.2768{\rm e}-3&  \\
			$64\times 64$ & 9.0822{\rm e}-3 &  2.2407   &6.2923{\rm e}-3 & 1.6790 & 1.5050{\rm e}-3 & 1.2399 & 7.7215{\rm e}-4& 1.5601 \\
			$128\times 128$ & 2.3938{\rm e}-3 & 1.9237 & 1.7993{\rm e}-3 & 1.8062&4.5948{\rm e}-4 &1.7117 &2.3579{\rm e}-4 & 1.7114 \\
			$256\times 256$ & 6.2114{\rm e}-4 & 1.9463 & 4.7037{\rm e}-4 & 1.9356 &1.2152{\rm e}-4 & 1.9188 &6.4456{\rm e}-5&1.8711\\
			$512\times 512$& 1.5763{\rm e}-4& 1.9784 &1.1685{\rm e}-4 & 2.0091&3.1578{\rm e}-5 &1.9442    &1.7054{\rm e}-5&1.9182  \\
			\hline
		\end{tabular}
	\end{table}
	
	\begin{table}[!th]
 		\caption{Convergence rate of quadratic FEM ($N=2$) with $k  = 10$.}
		\label{tableconv2} \small
		\begin{tabular}{|c|cc|cc|cc|cc|}
			\hline
			mesh &$\|e_u^{R}\|_{L^2(\Omega_1)}$ & order &$\|e_u^{I}\|_{L^2(\Omega_1)}$ & order  &$\|e_v^{R}\|_{L^2(\Omega_{2})}$ & order &$\|e_v^{I}\|_{L^2(\Omega_{2})}$ & order \\
			\hline
			$32\times 32$ &8.1397{\rm e}-3 &       &4.4495{\rm e}-3 &        & 9.1988{\rm e}-5 &    & 1.3017{\rm e}-4&  \\
			$64\times 64$ & 1.1463{\rm e}-3 & 2.8280 & 1.5706{\rm e}-3 &1.5023&2.3888{\rm e}-5 &1.9452 &1.2189{\rm e}-5 &3.4167\\
			$128\times 128$ & 1.5962{\rm e}-4 &2.8443&1.2899{\rm e}-4 &3.6060&3.0490{\rm e}-6 &2.9699    &1.4703{\rm e}-6&3.0514\\
			$256\times 256$ & 1.1351{\rm e}-5 &3.8138&9.5207{\rm e}-6 &3.7600&2.3101{\rm e}-7 & 3.7223   &2.5842{\rm e}-7&2.5083\\
			\hline
		\end{tabular}
		\end{table}

	\begin{table}[!ht]
		\caption{Convergence rate of FEM ($N=4$) with $k  = 50$.} \small
		\label{tableconv3}
		\begin{tabular}{|c|cc|cc|cc|cc|}
			\hline
			mesh &$\|e_u^{R}\|_{L^2(\Omega_1)}$ & order &$\|e_u^{I}\|_{L^2(\Omega_1)}$ & order  &$\|e_v^{R}\|_{L^2(\Omega_{2})}$ & order &$\|e_v^{I}\|_{L^2(\Omega_{2})}$ & order \\
			\hline
			$32\times 32$ &1.2654{\rm e}-2 &       &1.3659{\rm e}-2 &        & 1.3070{\rm e}-3 &    & 1.4873{\rm e}-3&  \\
			$64\times 64$ & 6.7773{\rm e}-4 & 4.2227 & 6.4418{\rm e}-4 &4.4062&8.2377{\rm e}-5 &3.9878 &8.6130{\rm e}-5 &4.1100\\
			$128\times 128$ & 1.7829{\rm e}-5 &5.2484&1.8616{\rm e}-5 &5.1128&9.5980{\rm e}-7 &6.4233    &1.5095{\rm e}-6&5.8344\\
			$256\times 256$ & 6.0015{\rm e}-7 &4.8927&6.2044{\rm e}-7 &4.9070&4.1087{\rm e}-8 & 4.5449   &6.5804{\rm e}-8&4.5197\\
			\hline
		\end{tabular}
	\end{table}

 In Figures  \ref{fufig2} and \ref{fufig3}, we depict the 2D plots of  wave propagation and profiles along the $x_1$-axis.
 Some observations from tables and figures are listed in order.
 \begin{itemize}
 \item The orders of convergence are as expected for typical FEM approximations. The use of higher order elements is necessary for higher wave numbers.
 \smallskip
 \item Thanks to the substitution in $\Omega_2,$ the approximation of $v$  is more accurate in magnitude of the errors.
 \smallskip
 \item In the layer $\Omega_2$, the fields are essentially free of oscillations. Moreover, the coefficients of the RCL-equation are all real. These show the robustness and non-reflective nature of this new technique.
\end{itemize}


	\begin{figure}[!ht]
		\centering
		\subfigure[Real part of numerical solution]{
			\includegraphics[width=0.40\textwidth]{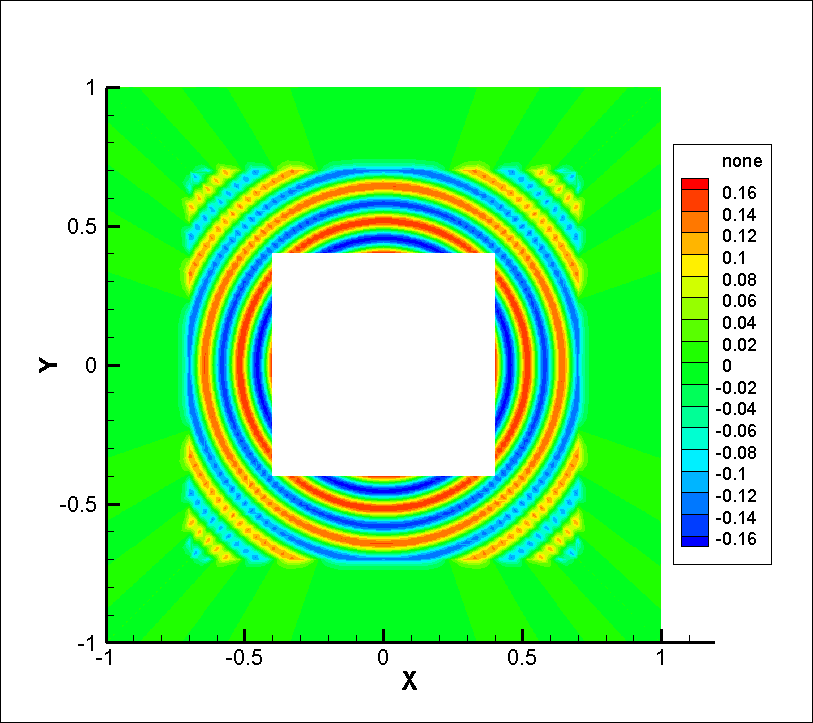}
		}
		\subfigure[Imaginary part numerical solution]{
			\includegraphics[width=0.40\textwidth]{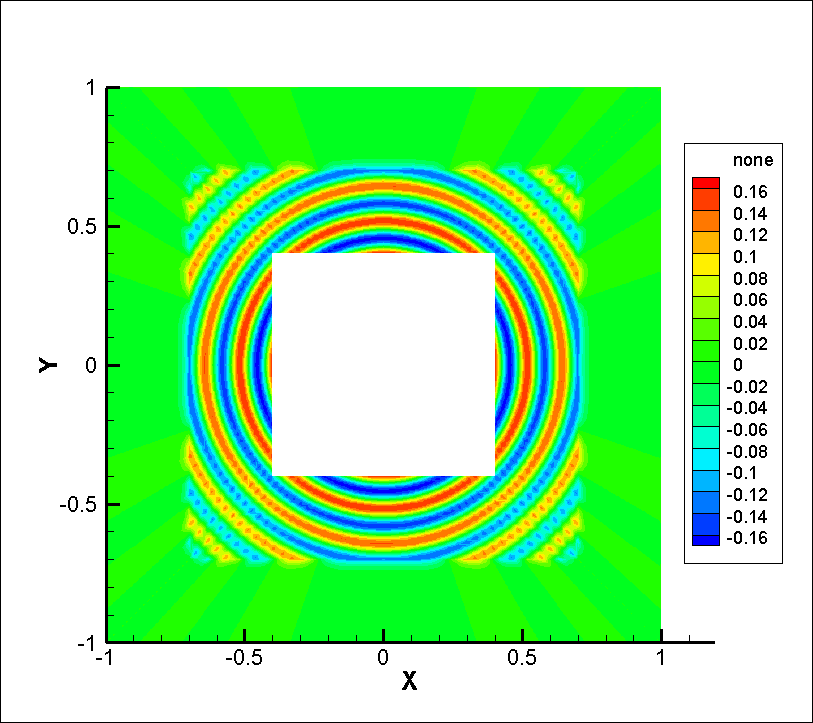}
		}
		\subfigure[Real part of exact solution]{
			\includegraphics[width=0.40\textwidth]{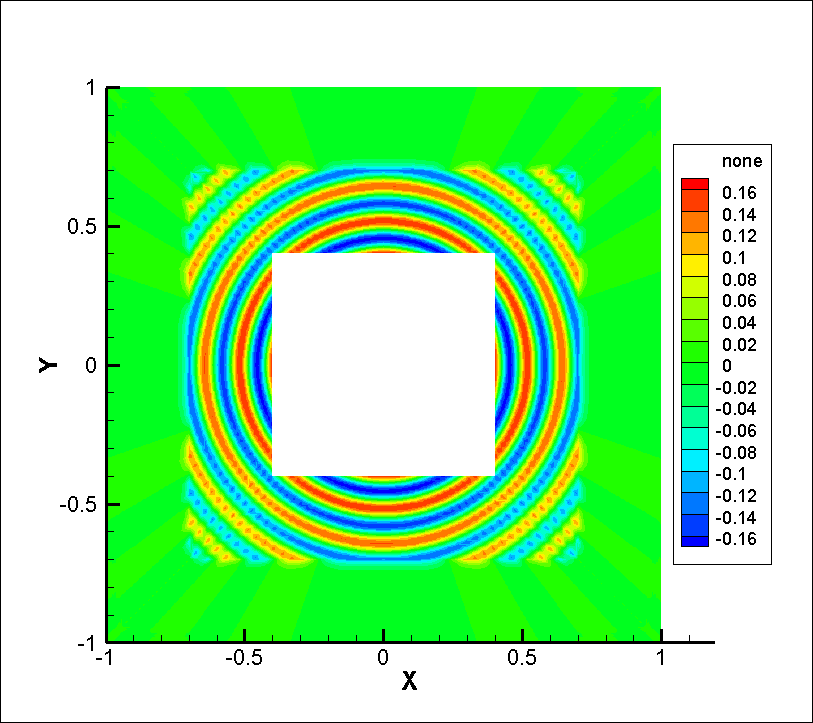}
		}
		\subfigure[Imaginary part of exact solution]{
			\includegraphics[width=0.40\textwidth]{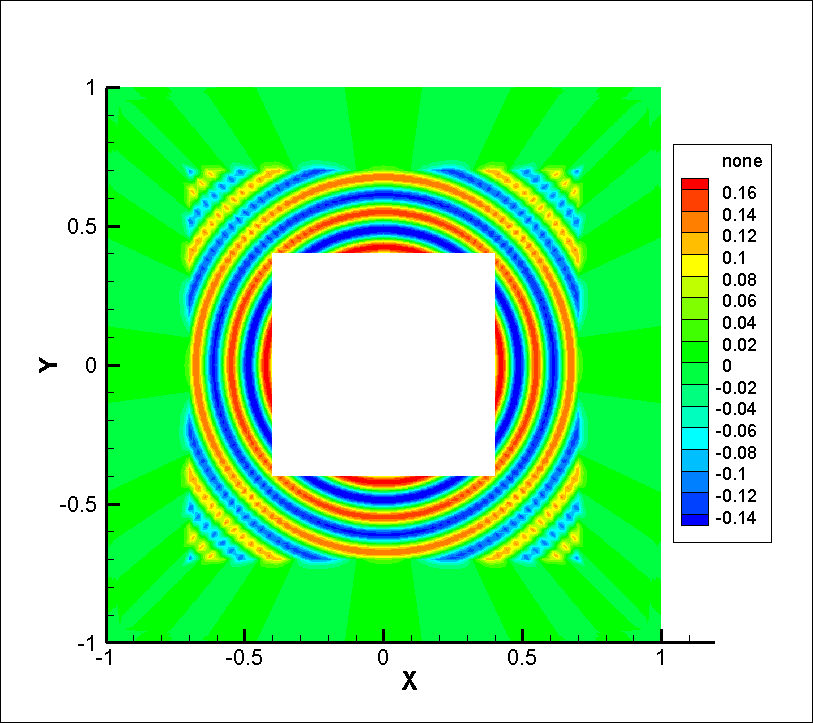}
		}
		\caption{2D plots of numerical and exact solutions  with $k  = 50, N = 4.$}\label{fufig2}
	\end{figure}
	\begin{figure}[!th]
		\centering
		\subfigure[Real part]{
			\includegraphics[width=0.40\textwidth]{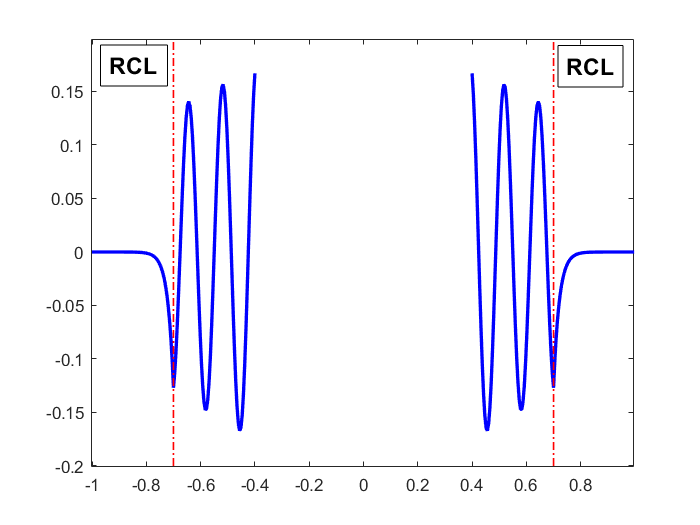}
		}
		\subfigure[Imaginary part]{
			\includegraphics[width=0.40\textwidth]{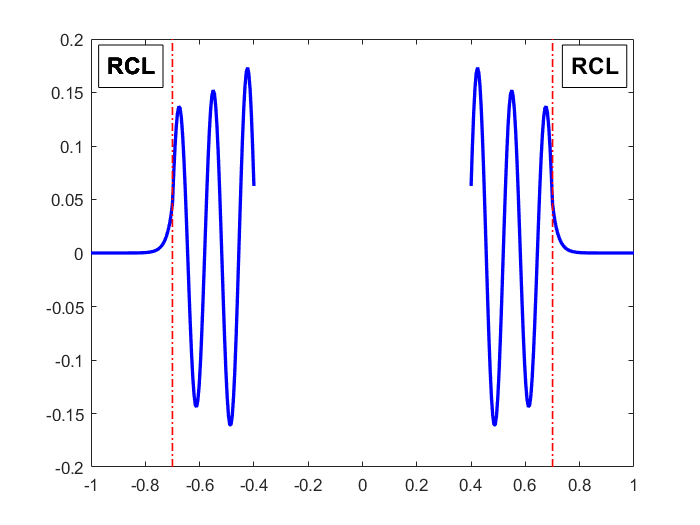}
		}
		\caption{Profiles of the numerical solution along $x_1$-axis  with $k = 50, N= 4$ in 2D.}\label{fufig3}
	\end{figure}

\subsubsection{Application to an $L$-shaped scatterer}
 As an application, we take the scatterer $D$ to be an $L$-shaped domain contained in the square, and use a setting as in the tests above. Again we see that the fields in $\Omega_2$ have essentially no oscillations.

	\begin{figure}[!ht]
		\centering
  \subfigure[Real part]{
			\includegraphics[width=0.40\textwidth]{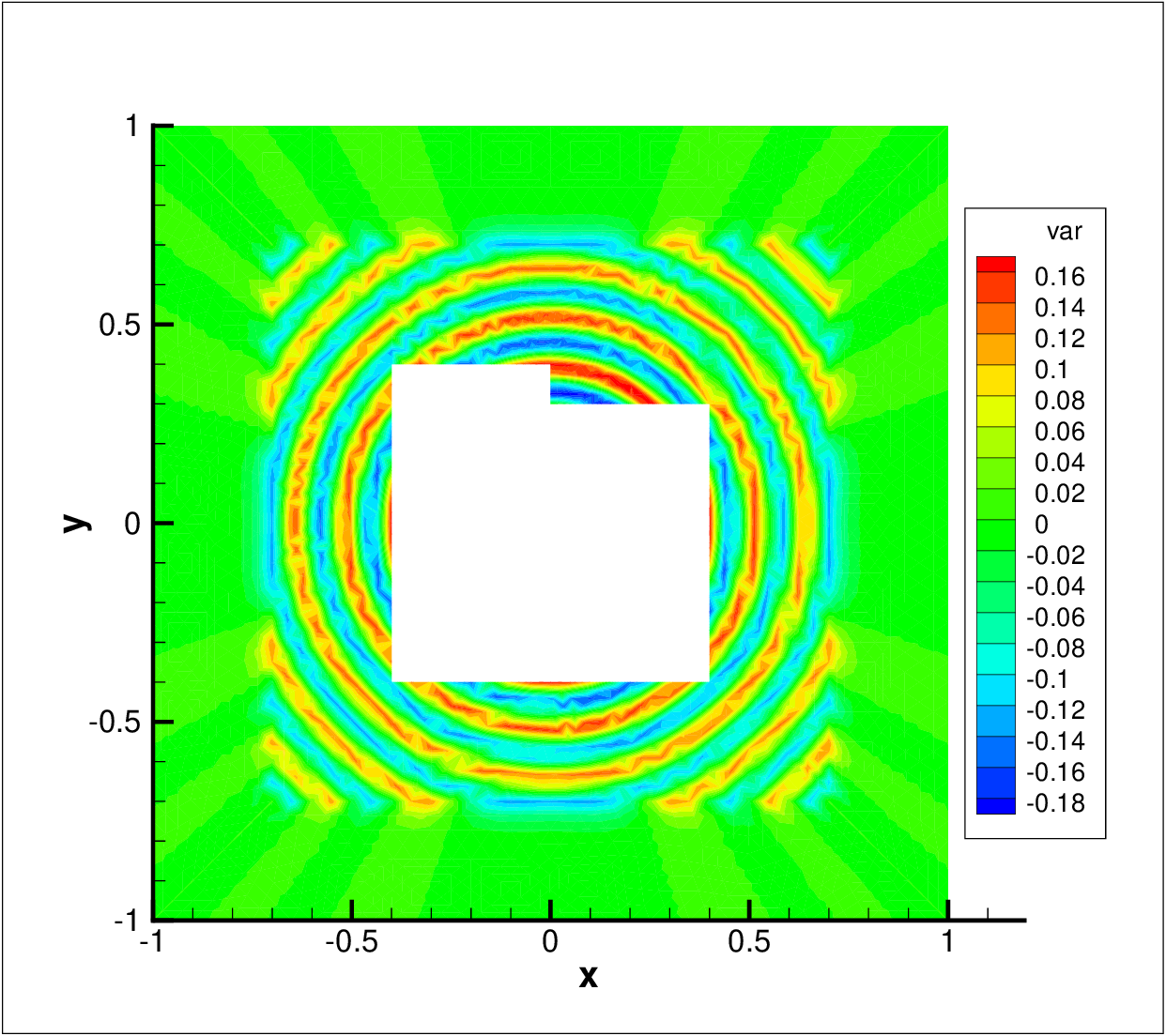}
		}\qquad
		\subfigure[Imaginary part]{
			\includegraphics[width=0.40\textwidth]{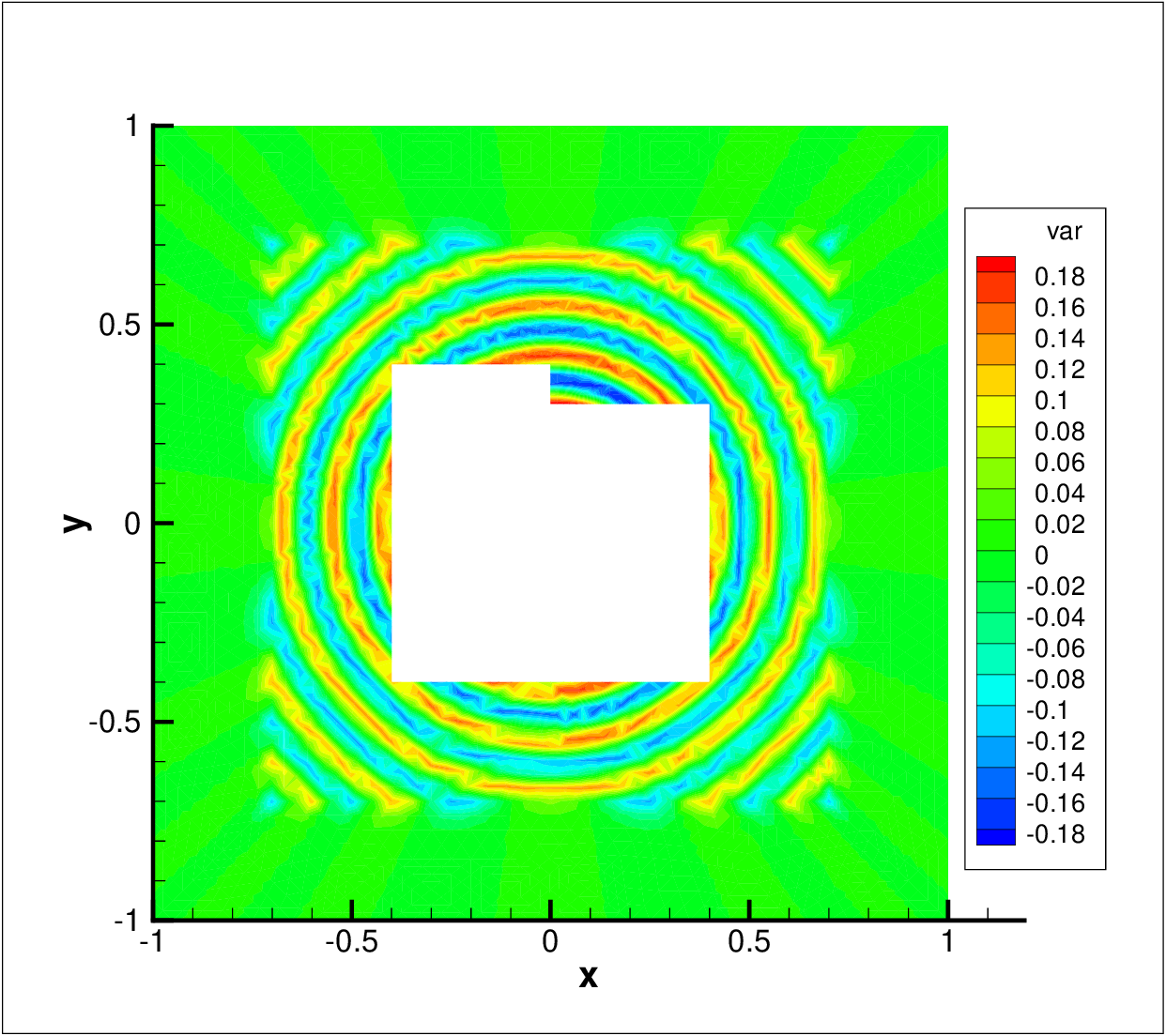}
		}
		\caption{Waves scattered by an $L$-shaped scatterer simulated by the FEM scheme with $N=4$ and $k=50$. }\label{fufig4}
	\end{figure}

	
	\section{Concluding remarks}

	In summary, we proposed a PML-type technique for domain reduction of time-harmonic acoustic wave scattering problems based on a suitable real coordinate transformation.
 Although it has been long known  that the naive use of RCT is not feasible for scattering waves, we showed that (i) the properly chosen RCT could induce an exponential decay factor from slow decay factor in the original coordinates; and  (ii) the resulting highly oscillatory factor could be explicitly extracted and built in the numerical solver. We demonstrated that this new technique is robust and non-reflective for high wave numbers. Compared with all existing techniques based on complex coordinate stretching, the PDE in the layer has real coefficients and the computed fields  can provide a reasonable recovery of the far-field outgoing waves.


 Here, we presented this novel technique in two dimensions, but the ideas can be extended to three dimensions. It is certainly of interest to explore the time-domain RCL and other type of wave propagation. Moreover, we have noticed the recent works on different perspectives of the Helmholtz problems including PML and related theoretical aspects, see e.g., \cite{chaumont2022wavenumber,galkowski2022hp,jiang2022finite,li2023new,li2019fem,li2020cip,zhu2013preasymptotic}.
	
	\bibliographystyle{siam}
	\bibliography{RPML}
\end{document}